\documentclass[11pt,a4paper,reqno]{amsart}
\usepackage{amsfonts,amssymb,amsmath,amsthm}
\usepackage{a4wide}
\usepackage{mathrsfs}%adds \mathscr{} fonts
\usepackage{xspace}
\usepackage{verbatim}
\usepackage{booktabs}

\hfuzz=5pt\vfuzz=3pt
\addtolength{\parskip}{0.2em}

\newtheorem{theorem}{Theorem}[section]
\newtheorem{proposition}[theorem]{Proposition}

\newtheorem{lemma}[theorem]{Lemma}
\theoremstyle{remark}
\newtheorem{remark}{Remark}[section]
\newtheorem{example}{Example}[section]
\newtheorem{definition}{Definition}[section]

\numberwithin{equation}{section}

\newcommand{\N}{{\mathbb N}}
\newcommand{\R}{{\mathbb R}}
\renewcommand{\S}{{\mathcal S}}
\renewcommand{\L}{\mathscr{L}}

\newcommand{\tx}[1]{\mbox{\;\;{#1}\;\;}}
\newcommand{\beqn}{\begin{eqnarray}}
\newcommand{\eeqn}{\end{eqnarray}}   % equation with number
\newcommand{\beq}{\begin{eqnarray*}}
\newcommand{\eeq}{\end{eqnarray*}}

\newcommand{\dx}{\mathrm{d}}
\newcommand{\M}{\mathcal{M}}
\title[Boundary singularities of equations with Hardy potential]{Boundary singularities of solutions of\\ semilinear elliptic equations
in the half-space\\
with a Hardy potential}

\author{Catherine Bandle}
\address{Departement Mathematik und Informatik, Universit\"at Basel\\ Spiegelgasse1, CH-4051 Basel, Switzerland}
\email{c.bandle@gmx.ch}

\author{Moshe Marcus}
\address{Department of Mathematics, Technion, Haifa 32000, Israel}
\email{marcusm@math.technion.ac.il}

\author{Vitaly Moroz}
\address{Swansea University\\ Department of Mathematics\\ Singleton Park\\
Swansea\\ SA2~8PP\\ Wales, United Kingdom}	
\email{v.moroz@swansea.ac.uk}

\subjclass{35J60, 35J67, 31B25}
\keywords{Boundary blow-up, singular solutions, sub and supersolutions, Phragm\'en--Lindel\"of principle, Hardy inequality}
\date{\today}
%\thanks{}

\begin{document}

\begin{abstract}
We study a nonlinear equation in the half-space $\{x_1>0\}$ with a Hardy potential, specifically
\[-\Delta u -\frac{\mu}{x_1^2}u+u^p=0\quad\text{in}\quad \mathbb R^n_+,\]
where $p>1$ and $-\infty<\mu<1/4$. The admissible boundary behavior of the positive solutions is either
$O(x_1^{-2/(p-1)})$ as $x_1\to 0$, or is determined by the solutions of the linear problem $-\Delta h -\frac{\mu}{x_1^2}h=0$.
In the first part we study in full detail the separable solutions of the linear equations for the whole
range of $\mu$. In the second part, by means of sub and supersolutions we construct separable solutions of the nonlinear problem
which behave like $O(x_1^{-2/(p-1)})$ near the origin and which,
away from the origin have exactly the same asymptotic behavior as the separable solutions of the linear problem.
In the last part we construct solutions that behave like  $O(x_1^{-2/(p-1)})$ at some prescribed
parts of the boundary, while at the rest of the boundary the solutions decay or blowup at a slower rate determined by
the linear part of the equation.
\end{abstract}

\maketitle

\section{Introduction}

We study positive solutions of the semilinear problem
\begin{equation*}
\tag{${P}_\mu$}
\label{P}
-\Delta u -\frac{\mu}{x_1^2} u+u^p=0 \quad \tx{in} \R^n_+,
\end{equation*}
where $p>1$, $-\infty<\mu<1/4$ and $\R^n_+:=\{x\in\R^n:x_1>0\}$ is the half-space in $\R^n$, with $n\ge 2$.

Equation \eqref{P} serves as a model for a more general problem
in a bounded smooth domain $\Omega\subset\R^n$,
\begin{equation*}\label{**}
\tag{${P}_{\Omega,\mu}$}
-\Delta u-\frac{\mu}{\delta^2}u+u^p=0\quad\mbox{in }\Omega,
\end{equation*}
where $\delta(x):=\mathrm{dist}(x,\partial\Omega)$
is the distance function to the boundary of $\Omega$.
Because of the strong singularity of the Hardy potential,
the boundary values of the solutions of \eqref{**} cannot be prescribed arbitrarily.
The Hardy potential forces the solutions either to vanish or to be singular at the boundary.
Existence and boundary behavior of the solutions of \eqref{P} has been discussed in \cite{BaMoRe08,BaMoRe09}.
In \cite{BaMoRe08} it was observed that in some regimes the nonlinearity gives rise to uniform boundary blowup solutions that behave like $O(\delta^{-2/(p-1)})$
at the boundary, whereas for certain values of parameters $\mu$ and $p$  the equation admits solutions that behave like $O(\delta^{\alpha_+})$ or $O(\delta^{\alpha_-})$, uniformly near the boundary, where
\begin{equation}\label{roots}
\alpha_\pm=\frac{1}{2}\pm\sqrt{\frac{1}{4}-\mu}
\end{equation}
are the roots of the quadratic equation $-\alpha(\alpha-1)=\mu$.

The question arises if it is possible to find solutions of \eqref{**} with {\em nonuniform} boundary behavior,
say solutions that grow like $O(\delta^{-2/(p-1)})$ on one part of the boundary and like $O(\delta^{\alpha_\pm})$ on other parts.
We refer to solutions that grow as $O(\delta^{-2/(p-1)})$ at parts of the boundary as {\em solutions with strong singularities}.
Note that {\em moderate} singular solutions of \eqref{**} that behave like $O(\delta^{\alpha_-})$ on some parts
of the boundary and $O(\delta^{\alpha_+})$ at another part were recently studied in \cite{Marcus-Nguen,MM}.
The existence of solutions of \eqref{**} with a nonuniform boundary behavior was also  discussed in \cite{GkVe15}.

The aim of the present paper is to study positive solutions with strong singularities in the case of the model problem \eqref{P} on the half-space.
In what follows, we say that a solution $u$ of \eqref{P} has a {\em strong singularity} at a point $x_0\in\partial\R^n_+$ if
$$\liminf_{x\to x_0}u(x)|x-x_0|^\frac{2}{p-1}>0\quad\text{non-tangentially}.$$
If  $\frac{2(p+1)}{(p-1)^2} + \mu>0$, it is straightforward to see that the function
\begin{equation}\label{e-Ustar}
U_*(x)=C_{p,\mu}x_1^{-\frac{2}{p-1}}
\end{equation}
with $C_{p,\mu}= \left\{\frac{2(p+1)}{(p-1)^2} + \mu\right\}^{1/(p-1)}$
is a solution \eqref{P} that has a strong singularity at every point of the boundary.
The above condition on $\mu$ is fulfilled when $1<p<p_{KO}$, where
\begin{equation}\label{de:p}
p_{KO}:=
\begin{cases}
+\infty & \mbox{if } \mu\ge 0,\\
1-\frac{2}{\alpha_-} &\mbox{if } \mu < 0,\\
\end{cases}
\end{equation}
is the {\em Keller--Osserman exponent}.
Note that if $\mu<0$ then $\alpha_-<0$, and hence $p_{KO}>1$.
An adaptation of the arguments in \cite[Lemma 4.11]{BaMoRe08} shows that for $\mu<0$ the critical exponent $p_{KO}$ is sharp,
in the sense that for $p\ge p_{KO}$ problem \eqref{P} has no solutions that behave like $O(x_1^{-2/(p-1)})$ near the boundary.

If $\mu>\frac{1}{4}$, Lei Wei \cite{Wei15} has proved that $U_*$ is the unique positive solution of problem \eqref{P}.
In this case the blowup rate of the solution near the boundary is determined only by the nonlinearity and does not depend on $\mu$.
See also \cite{DuWei15} for relevant results in the case of problem \eqref{**}.

In what follows we focus on the case $-\infty<\mu<\frac{1}{4}$. Then the linear regime comes into play,
when the behavior of positive solutions of \eqref{P} is determined not only by the nonlinearity but also by the linear equation
\begin{equation*}\tag{${L}_\mu$}
\label{harmonic}
-\L_\mu h=0 \quad \tx{in} \R^n_+, \quad\tx{where} \L_\mu:=\Delta+\frac{\mu}{x_1^2}.
\end{equation*}
The restriction $\mu<1/4$ is related to Hardy's inequality
$$\int_{\R^n_+}|\nabla \varphi|^2\ge \frac{1}{4}\int_{\R^n_+}\frac{\varphi^2}{x_1^2},\qquad\forall\varphi\in C^\infty_c(\R^n_+),$$
where $1/4$ is the {\em optimal constant}, see \cite{MMP}. It turns out that for $\mu>\frac{1}{4}$ no positive
solutions of \eqref{harmonic} exists \cite{MMP},
while for $\mu<1/4$ the set of positive solutions \cite{MMP}
has a reach structure.

In what follows, a function $h\in L^1_{loc}(\R^n_+)$ is called an $\L_\mu$-{\em subharmonic function}
if $h$ is a distributional subsolution of \eqref{harmonic}, i.e.,
\begin{equation*}
\int_{\R^n_+}h(-\Delta \varphi)\,dx-
\int_{\R^n_+}\frac{\mu}{x_1^2}h\varphi\,dx\le\:0
\qquad\forall\:0\le\varphi\in C^\infty_c(\R^n_+).
\end{equation*}
If we replace ``$\le$'' by ``$\ge$'' or ``$=$'' in the above inequality
then we say that $h$ is an $\L_\mu$-{\em superharmonic} or $\L_\mu$-{\em harmonic}, respectively.
By the elliptic regularity, every $\L_\mu$-{\em harmonic} is a classical solution of \eqref{harmonic}.
A {\em separable} $\L_\mu$-harmonic is of the form $r^\gamma H(\theta)$ where $r=|x|$ is the distance to the origin and $\theta$ is the azimuth,
that is $\cos(\theta)=\frac{x_1}{|x|}$, see Section \ref{sect-polar} below for detailed definitions.

If $\mu<1/4$, then
$$h_+(x_1)=x_1^{\alpha_+}\quad\text{and}\quad h_-(x_1)=x_1^{\alpha_-}$$
are two $\L_\mu$-harmonics with uniform decay at the boundary.
We call $h_+$ {\em small} and $h_-$ {\em large} $\L_\mu$-harmonics, respectively.
Observe that $\alpha_->0$ if $0<\mu<1/4$ and large $\L_\mu$-harmonic $h_-$ vanishes at the boundary.
Nevertheless $\eta h_-\not\in H^1_0(\R^N_+)$, for any $\eta\in C^\infty_c(\R^n)$ that is positive on a part of $\{x_1=0\}$.

A direct computation also shows that the function
\begin{equation*}
H_-(x)=x_1^{\alpha_+}|x|^{-(n-2+2\alpha_+)}
\end{equation*}
is a separable positive $\L_\mu$-harmonic. $H_-$ has an isolated point singularity at the origin,
and behaves like the small $\L_\mu$-harmonic $h_+$ away from the origin.
It is known that $H_-$ is the unique (up to a scalar multiple) positive $\L_\mu$-harmonic with this behavior \cite{Ancona,Ancona-88}.
In Proposition \ref{thm:h1} we provide a uniqueness proof in the class of separable $\L_\mu$-harmonics,
which is based only on elementary ODE arguments.

The set of positive $\L_\mu$-harmonics with a point singularity at the origin that behave like
the large $\L_\mu$-harmonic $h_-$  away from the origin has a more complicated structure.
We show that the admissible rate of growth at the origin for such $\L_\mu$-harmonics fills an entire interval.

\begin{theorem}\label{t-phs}
Let $-\infty<\mu<1/4$.
For every $\gamma\in\big(-(n-2+\alpha_+),\alpha_+\big)$
there exists a separable positive $\L_\mu$-harmonic $H_\gamma$ such that
\begin{equation}\label{e-gamma-h}
c^{-1} x_1^{\alpha_-}|x|^{\gamma-\alpha_-}\leq H_\gamma(x)\leq c x_1^{\alpha_-}|x|^{\gamma-\alpha_-}\qquad \forall x\in \R^n_+.
\end{equation}
\end{theorem}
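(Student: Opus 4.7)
The plan is to pass to spherical coordinates about the $x_1$-axis, reduce the equation to a singular Sturm--Liouville ODE on $(0,\pi/2)$, and construct the angular profile by exploiting the explicit separable solutions already available in the paper.

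Writing $x=r\omega$ with $r=|x|$ and $\theta=\arccos(\omega_1)\in(0,\pi/2)$, the axial symmetry of the Hardy weight $1/x_1^2$ forces the sought separable harmonic to have the form $r^\gamma H(\theta)$, where $H$ solves
\[
(\sin^{n-2}\theta\,H')' + \sin^{n-2}\theta\Big(\Lambda(\gamma)+\tfrac{\mu}{\cos^2\theta}\Big)H = 0,\qquad\theta\in(0,\pi/2),
\]
with $\Lambda(\gamma):=\gamma(n-2+\gamma)$. A Frobenius analysis yields indicial exponents $0$ and $3-n$ at $\theta=0$, giving a unique up-to-scalar ``regular-at-axis'' solution, and exponents $\alpha_\pm$ at $\theta=\pi/2$ (after setting $s=\pi/2-\theta$, the leading balance is $H_{ss}+\mu s^{-2}H\simeq 0$), so the solution space near $\pi/2$ decomposes into a ``large'' $s^{\alpha_-}$ branch and a ``small'' $s^{\alpha_+}$ branch. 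Note that $\Lambda(\gamma)\le\Lambda(\alpha_+)=\alpha_+(n-2+\alpha_+)$ throughout the closed interval $[-(n-2+\alpha_+),\alpha_+]$, with strict inequality in the open interior.

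The next step is to identify the explicit anchors. Using $\alpha_+\alpha_-=\mu$ and $\alpha_++\alpha_-=1$, a direct substitution shows that $\cos^\alpha\theta$ solves the ODE precisely when $\alpha(\alpha-1)=-\mu$ and $\Lambda(\gamma)=\Lambda(\alpha)$, i.e., $\alpha\in\{\alpha_+,\alpha_-\}$ and $\gamma$ coincides with $\alpha$ or $-(n-2+\alpha)$. This produces the profile $\cos^{\alpha_+}\theta$ for $\gamma\in\{\alpha_+,-(n-2+\alpha_+)\}$ (matching $h_+$ and $H_-$), and $\cos^{\alpha_-}\theta$ for $\gamma\in\{\alpha_-,-(n-2+\alpha_-)\}$ (matching $h_-$ and its inversion partner). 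This yields the crucial rigidity: a solution that is simultaneously regular at the axis \emph{and} of small type at the boundary forces $\Lambda(\gamma)=\Lambda(\alpha_+)$. Hence for every $\gamma$ strictly inside the interval, the regular-at-axis solution must carry a nontrivial large branch at $\theta=\pi/2$, and after normalization its asymptotic expansion matches $\cos^{\alpha_-}\theta$ up to a positive constant, which is exactly the boundary estimate in \eqref{e-gamma-h}.

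It remains to show positivity of this regular-at-axis solution on all of $(0,\pi/2)$ and to upgrade the boundary matching to the uniform two-sided bound \eqref{e-gamma-h}. My plan is to use a Sturm--Picone comparison against $\cos^{\alpha_-}\theta$: if $H$ had a first zero at some $\theta_0\in(0,\pi/2)$, the Wronskian identity on $(\theta_0,\pi/2)$ against the reference would conflict with the matching large-type asymptotic at $\pi/2$. A more robust alternative is a Phragm\'en--Lindel\"of-type maximum principle applied directly to $r^\gamma H(\theta)$ as an $\L_\mu$-harmonic function in $\R^n_+$, consistent with the paper's keyword list. The main obstacle I anticipate is precisely this positivity step combined with uniformity of the constants in \eqref{e-gamma-h}: both routes rely on the strict gap $\Lambda(\alpha_+)-\Lambda(\gamma)>0$ to rule out sign changes and to keep the boundary coefficient of the large branch bounded away from zero, so a careful uniform Wronskian estimate on compact subsets of the open interval will be needed to produce the constant $c$.
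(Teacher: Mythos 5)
Your overall strategy coincides with the paper's: reduce to the singular Sturm--Liouville ODE in the angular variable, do a Frobenius analysis at the two endpoints, use $\Lambda(\gamma)<\Lambda(\alpha_+)$ to exclude the $\cos^{\alpha_+}\theta$ branch at the boundary, and conclude positivity by a Sturm-type comparison (this is precisely Proposition~\ref{phs} and the Remark following it). Two points, however, need to be repaired.

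The first is a misstatement rather than a fatal gap: ``regular at the axis and of small type at the boundary forces $\Lambda(\gamma)=\Lambda(\alpha_+)$'' is too strong. Such a solution is an eigenfunction of the weighted problem \eqref{eq:eigenvalue} (with $\nu_m=0$), so it forces $\Lambda(\gamma)\in\{\Lambda_{s,0}\}_{s\ge 1}$, a discrete set whose smallest element is $\Lambda_{1,0}=\Lambda(\alpha_+)$ because $t^{\alpha_+}$ is a positive (hence first) eigenfunction. You then need $\Lambda(\gamma)<\Lambda(\alpha_+)$ on the open interval to exclude the entire discrete spectrum, not just $\Lambda(\alpha_+)$. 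The conclusion you draw is correct, but the step as phrased is not.

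The second point is the genuine gap you flag yourself: the Sturm--Picone reference should be $\cos^{\alpha_+}\theta$, not $\cos^{\alpha_-}\theta$. With $\sigma=\sin^{n-2}\theta$, set $W=\sigma\big(H'\cos^{\alpha_+}\theta-H(\cos^{\alpha_+}\theta)'\big)$; then
$W'=\sigma\,\big(\Lambda(\alpha_+)-\Lambda(\gamma)\big)H\cos^{\alpha_+}\theta$,
which has a \emph{definite} sign wherever $H>0$ since $\Lambda(\gamma)-\Lambda(\alpha_+)<0$ uniformly on the open interval. Because $W(0)=0$ (as $\sigma(0)=0$ and $H$ is regular at the axis), if $H$ had a first zero $\theta_0\in(0,\pi/2)$, monotonicity of $W$ on $(0,\theta_0)$ would give $W(\theta_0)$ a strict sign, contradicting $W(\theta_0)=-\sigma(\theta_0)\cos^{\alpha_+}\!\theta_0\,H'(\theta_0)$, which has the opposite (weak) sign. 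By contrast, comparing against $\cos^{\alpha_-}\theta$ fails: the coefficient $\Lambda(\gamma)-\Lambda(\alpha_-)$ changes sign as $\gamma$ runs over $\big(-(n-2+\alpha_+),\alpha_+\big)$ (it is positive for $\gamma\in(\alpha_-,\alpha_+)$ and negative for $\gamma\in(-(n-2+\alpha_-),\alpha_-)$), so the Wronskian is not monotone; moreover the Wronskian between two solutions that both behave like $s^{\alpha_-}$ at $s=\pi/2-\theta\to 0$ cancels to leading order, so the ``conflict with the matching large-type asymptotic'' you invoke is governed by subleading terms you have not controlled. Once positivity is established against $\cos^{\alpha_+}\theta$, the two-sided bound \eqref{e-gamma-h} follows from the Frobenius asymptotics at both endpoints together with continuity and positivity on $(0,\pi/2)$; no extra uniformity in $\gamma$ is required, since the constant $c$ in the theorem is allowed to depend on $\gamma$.
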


\begin{remark}
For $\gamma=\alpha_-$ and $\gamma=-(\alpha_-+n-2)$ the $\L_\mu$-harmonic $H_\gamma$ has the explicit form
\begin{equation*}
H_{\alpha_-}=x_1^{\alpha_-} \tx{and} H_{-(\alpha_-+n-2)}= x_1^{\alpha_-}|x|^{-(n-2+2\alpha_-)}.
\end{equation*}
In the Examples \ref{ex-1} and \ref{ex-2} we  point out its connection with the pure Laplacian, i.e. $\mu=0$.
\end{remark}

\smallskip

Next we move to the study of the nonlinear problem \eqref{P}.
By a solution of \eqref{P} in what follows we understand a function $u\in L^1_{loc}(\R^n_+)$
that satisfies \eqref{P} in the distributional sense. Similarly, subsolutions and supersolutions are defined,
if we replace ``$=$'' by ``$\le$'' and ``$\ge$'', respectively.
By the elliptic regularity, every distributional solution of \eqref{P} is also a classical solution of \eqref{P}.
A {\em separable} solution is of the form $r^\gamma v(\theta)$, where $r$ is the distance to the origin and $\theta$ is the azimuth.

Our first result is the existence of a {\em separable} solution to \eqref{P} with a strong singularity
at the origin that behaves like a small $\L_\mu$-harmonic at the boundary, away from the origin.
We show that such solutions exist below the {\em critical exponent}
\begin{equation}\label{pc}
p_c:=1+\frac{2}{n-2+\alpha_+}.
\end{equation}
Observe that for any $\mu<1/4$ we always have $1<p_c<p_{KO}\le\infty$.

\begin{theorem}\label{thm-main}
Let $-\infty<\mu< 1/4$ and $1<p<p_c$. Then \eqref{P} admits a unique separable solution $u$ such that
\begin{equation}\label{e-plus}
c^{-1} x_1^{\alpha_+}|x|^{-\alpha_+ - \frac{2}{p-1}}\leq u(x)\leq c x_1^{\alpha_+}|x|^{-\alpha_+ - \frac{2}{p-1}} \qquad \forall x\in \R^n_+.
\end{equation}
\end{theorem}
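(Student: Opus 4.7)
The plan is to use a separable ansatz $u(x)=r^{\gamma_p}v(\theta)$ to reduce \eqref{P} to a singular ODE on $(0,\pi/2)$. Matching the scaling of $u^p$ against $\Delta u$ and $\mu u/x_1^2$ uniquely forces $\gamma_p:=-\tfrac{2}{p-1}$. Using $x_1/|x|=\cos\theta$ and the axisymmetric form of $\Delta_{S^{n-1}}$, the equation becomes
\begin{equation*}
-\frac{1}{\sin^{n-2}\theta}\bigl(\sin^{n-2}\theta\,v'\bigr)'-\Bigl(\gamma_p(\gamma_p+n-2)+\frac{\mu}{\cos^2\theta}\Bigr)v+v^p=0\quad\text{on }\left(0,\tfrac{\pi}{2}\right),
\end{equation*}
with $v'(0)=0$ for regularity along the $x_1$-axis. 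The two-sided bound \eqref{e-plus} is equivalent to $v(\theta)\asymp\cos^{\alpha_+}\theta$, so the task is to produce a positive solution of this ODE with the ``small'' indicial behavior at the singular endpoint $\theta=\pi/2$.

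For the super-solution I take $\overline v(\theta)=C_{p,\mu}\cos^{\gamma_p}\theta$, the angular profile of $U_*$, which is an exact solution of the ODE. For the sub-solution I take $\underline v=\varepsilon\cos^{\alpha_+}\theta$. The $\L_\mu$-harmonicity of $x_1^{\alpha_+}$ encodes the angular identity $\Delta_{S^{n-1}}(\cos^{\alpha_+}\theta)+[\alpha_+(\alpha_++n-2)+\mu/\cos^2\theta]\cos^{\alpha_+}\theta=0$, and substituting $\underline v$ into the ODE reduces the expression to
\[
\varepsilon\bigl[\alpha_+(\alpha_++n-2)-\gamma_p(\gamma_p+n-2)\bigr]\cos^{\alpha_+}\theta+\varepsilon^p\cos^{p\alpha_+}\theta.
\]
The hypothesis $p<p_c$ is equivalent to $\gamma_p<-(n-2+\alpha_+)$, and since $-(n-2+\alpha_+)$ is the reflection of $\alpha_+$ through $-\tfrac{n-2}{2}$, the upward parabola $t\mapsto t(t+n-2)$ yields $\gamma_p(\gamma_p+n-2)>\alpha_+(\alpha_++n-2)$. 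The bracketed coefficient is therefore a strictly negative constant, so for $\varepsilon$ small the superlinear perturbation is absorbed and $\underline v$ is a sub-solution; the same smallness ensures $\underline v\le\overline v$ on $(0,\pi/2)$.

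With the barriers in place I would construct $v$ by monotone iteration on a truncated interval $(\delta,\tfrac{\pi}{2}-\delta)$, realized as a regularized elliptic problem on a spherical shell in $\R^n_+$; passing $\delta\to 0$ using interior Schauder estimates and a local Harnack bound gives a classical solution $v\in C^2(0,\pi/2)$ with $\underline v\le v\le\overline v$. This sandwich immediately delivers the lower half of \eqref{e-plus}.

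The main obstacle is the sharp upper bound $v(\theta)\le c\cos^{\alpha_+}\theta$, since $\overline v\sim\cos^{\gamma_p}\theta$ blows up much faster near the equator. I would treat it by an asymptotic analysis of the ODE as $s:=\tfrac{\pi}{2}-\theta\to 0$: the linearization has indicial roots $\alpha_\pm$ with fundamental modes $s^{\alpha_\pm}$, and the nonlinear source $v^p\le Cs^{p\gamma_p}=Cs^{\gamma_p-2}$ can be viewed as a perturbation. A Phragm\'en--Lindel\"of/Frobenius comparison with an explicit barrier of the form $c_+\cos^{\alpha_+}\theta+c_-\cos^{\alpha_-}\theta$ (iterated to improve the exponent) forces the coefficient of the large mode to vanish, yielding $v(\theta)\sim c_+\cos^{\alpha_+}\theta$ with $c_+\ge\varepsilon>0$. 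Uniqueness within the class \eqref{e-plus} then follows from a standard ODE comparison: for two such solutions the ratio has matching positive boundary limits at both endpoints, and the strict convexity of $v\mapsto v^p$ together with the strong maximum principle force the ratio to be identically $1$.
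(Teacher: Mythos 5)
Your reduction to the angular ODE and your subsolution $\underline v=\varepsilon\cos^{\alpha_+}\theta$ coincide with the paper's construction, and your computation that $p<p_c$ makes the bracketed coefficient $\Lambda(\alpha_+)-\Lambda(\gamma_p)$ strictly negative is correct. The genuine gap is the supersolution. Taking $\overline v=C_{p,\mu}\cos^{\gamma_p}\theta$ only yields $v\le C\cos^{\gamma_p}\theta$, and since $\gamma_p<\alpha_-<\alpha_+$ this is far weaker than the required $v\le c\cos^{\alpha_+}\theta$; the sandwich $[\varepsilon\cos^{\alpha_+}\theta,\,C_{p,\mu}\cos^{\gamma_p}\theta]$ does not single out the solution of the theorem. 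The paper closes this by choosing $\overline v=ct^{\alpha_+}(1-\kappa t^\epsilon)$ with $\kappa<1$, $0<\epsilon<2$ (here $t=\cos\theta$). The correction term produces, near $t=0$, the dominant negative contribution $-c\kappa\epsilon(2\alpha_+ +\epsilon -1)\,t^{\alpha_+ +\epsilon-2}$ — which is where the standing hypothesis $\mu<1/4$, i.e.\ $2\alpha_+>1$, is used — and this absorbs all remaining terms once $c$ is large. That supersolution delivers the sharp two-sided bound directly, with no need for subsequent asymptotic analysis.

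The proposed fix — iterating Frobenius/Phragm\'en--Lindel\"of comparisons to "force the coefficient of the large mode to vanish" — cannot work as stated, because for $1<p<p_c<p_{KO}$ the same angular ODE also admits positive solutions with $v\asymp t^{\alpha_-}$ (this is precisely Lemma~\ref{existence2} of the paper), and those solutions lie between your barriers too. So nothing in your construction kills the large mode; identifying a supersolution that is itself $O(t^{\alpha_+})$ at the singular endpoint is the missing idea. A smaller issue: in the uniqueness sketch, the ratio of two solutions satisfying \eqref{e-plus} need not have matching limits at $t=0$ a priori (the leading coefficients of $t^{\alpha_+}$ can differ), which is why the paper does not argue via the ratio limit but instead substitutes $v=t^{\alpha_+}w$, shows $w(0)>0$ via Lemma~\ref{pr:as}, and then runs the integral identity/energy argument of \cite[Lemma 3.2]{BaMoRe08}.
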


Relevant results for $0<\mu<1/4$ were also established in \cite[Section 4]{GkVe15}.
\medskip

Using an elementary Phragm\'en--Lindel\"of type argument in combination with a localised Keller--Osserman type bound,
we also establish a nonexistence result that shows that the value of the critical exponent $p_c$ is sharp.
%%%%%%%%%%%%%%%%%%%%TTTTTTTTTT
\begin{theorem}\label{thm-nonexist}
Let $-\infty<\mu< 1/4$ and $p>p_c$. Then \eqref{P} does not admit positive solutions that satisfy \eqref{e-plus}.
\end{theorem}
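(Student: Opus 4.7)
The plan is to reduce the assertion to the non-existence of a positive angular profile $v(\theta)$ for the self-similar ansatz $u(x)=r^\gamma v(\theta)$, where $r=|x|$, $\cos\theta=x_1/|x|$ and $\gamma:=-2/(p-1)$, and then to rule out such a profile by testing the resulting ODE against the principal angular eigenfunction $\phi_+(\theta):=\cos^{\alpha_+}\theta$. This choice is natural because $h_+(x)=x_1^{\alpha_+}=r^{\alpha_+}\cos^{\alpha_+}\theta$ is $\L_\mu$-harmonic, which identifies $\phi_+$ as an eigenfunction of $-\Delta_\theta-\mu/\cos^2\theta$ (with weight $\sin^{n-2}\theta$ on $[0,\pi/2]$) with eigenvalue $\lambda_+:=\alpha_+(\alpha_++n-2)$. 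The key algebraic fact is that the quadratic $f(\gamma):=\gamma(\gamma+n-2)$ equals $\lambda_+$ at exactly $\gamma=\alpha_+$ and $\gamma=-(n-2+\alpha_+)$ and is strictly smaller in between; the assumption $p>p_c$ is precisely the requirement that $\gamma=-2/(p-1)$ lies in the open interval $(-(n-2+\alpha_+),0)\subset(-(n-2+\alpha_+),\alpha_+)$, so $f(\gamma)<\lambda_+$.

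For the reduction, assume $u$ satisfies \eqref{e-plus}. The scale-invariance $u\mapsto u_R(x):=R^{2/(p-1)}u(Rx)$ preserves \eqref{P} and \eqref{e-plus} with the same constants, so $\{u_R\}_{R>0}$ is uniformly bounded on compact subsets of $\R^n_+$. Interior $C^{2,\alpha}$ elliptic estimates yield a subsequence $u_{R_k}\to u_\infty$ in $C^2_{\mathrm{loc}}(\R^n_+)$, with $u_\infty$ again solving \eqref{P} and obeying \eqref{e-plus}. Choosing $R_k=2^k$ (so the shift $R_k\mapsto 2R_k=R_{k+1}$ is the same subsequence) gives the dyadic homogeneity $u_\infty(2x)=2^{-2/(p-1)}u_\infty(x)$; running the same argument for each $\lambda>1$ and combining by continuity plus a density step upgrades this to full self-similarity $u_\infty(\lambda x)=\lambda^{-2/(p-1)}u_\infty(x)$ for all $\lambda>0$. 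Consequently $u_\infty=r^\gamma v(\theta)$, where $c^{-1}\cos^{\alpha_+}\theta\le v(\theta)\le c\cos^{\alpha_+}\theta$ and $v$ satisfies
\[
\Delta_\theta v+\Big(\frac{\mu}{\cos^2\theta}+\gamma(\gamma+n-2)\Big)v=v^p\quad\text{on }(0,\pi/2).
\]
Testing this identity against $\phi_+\sin^{n-2}\theta$, integrating by parts and using $-\Delta_\theta\phi_+-(\mu/\cos^2\theta)\phi_+=\lambda_+\phi_+$, one arrives at
\[
\bigl[\gamma(\gamma+n-2)-\lambda_+\bigr]\int_0^{\pi/2}\!v\,\phi_+\sin^{n-2}\theta\,d\theta=\int_0^{\pi/2}\!v^p\phi_+\sin^{n-2}\theta\,d\theta>0.
\]
The boundary contribution at $\theta=\pi/2$ vanishes because the two-sided bound in \eqref{e-plus} forbids a $\cos^{\alpha_-}\theta$-Frobenius component of $v$, so $v=\cos^{\alpha_+}\theta\,w(\theta)$ with $w\in C^1$ up to the endpoint; the contribution at $\theta=0$ vanishes by the weight $\sin^{n-2}\theta$ (or by symmetry for $n=2$). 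Since $f(\gamma)<\lambda_+$ for $p>p_c$, the left-hand side of the displayed identity is negative while the right-hand side is positive — the desired contradiction.

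The main obstacle is promoting the dyadic self-similarity to full self-similarity of $u_\infty$: this is a standard but somewhat delicate compactness step that uses the entire one-parameter family $\{u_R\}_{R>0}$ together with the rigidity provided by the uniform two-sided bound. The subsidiary technical point, the vanishing of the boundary term at $\theta=\pi/2$, reduces to a Frobenius-regularity statement once the matching leading behavior $v\asymp\cos^{\alpha_+}\theta$ has been recorded from \eqref{e-plus}.
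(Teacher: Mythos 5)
Your proposal takes a genuinely different route from the paper, but it contains a gap that, as written, is fatal. The paper does not perform any blow-up or reduction to separable solutions: it proves Theorem~\ref{thm-nonexist} as a corollary of Proposition~\ref{lemma-nonexistence}, whose proof is a two-line application of the Phragm\'en--Lindel\"of estimate (Lemma~\ref{l-PL}). The observation there is that the hypothesis $p>p_c$ makes the exponent $-\alpha_+-\tfrac{2}{p-1}$ strictly larger than $-(n-2+2\alpha_+)$, so condition~\eqref{e-plus} places $u$ \emph{below} the singular barrier $x_1^{\alpha_-}+x_1^{\alpha_+}|x|^{-(n-2+2\alpha_+)}$ near the origin; the comparison principle then forces $u\le C x_1^{\alpha_+}$ throughout, which contradicts the lower bound in~\eqref{e-plus}. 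This is a pure maximum-principle argument applied directly to $u$, with no separability required. Your Pohozaev-type test against $\phi_+=\cos^{\alpha_+}\theta$ is, on the other hand, essentially the paper's Lemma~\ref{existence1}(ii), which is stated only for separable angular profiles $v$; you are correct that it yields the contradiction $\bigl[\Lambda(\gamma)-\Lambda(\alpha_+)\bigr]\int v\phi_+ <0< \int v^p\phi_+$ once one is in the separable setting, and your sign analysis of $\gamma\in\bigl(-(n-2+\alpha_+),0\bigr)$ is right.

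The gap is in the reduction to the separable case. Interior elliptic estimates and the scale-invariant two-sided bound do give a subsequence $R_{k_j}\to\infty$ along which $u_{R_{k_j}}\to u_\infty$ in $C^2_{\mathrm{loc}}(\R^n_+)$, but the resulting $u_\infty$ is \emph{not} automatically $2$-homogeneous, let alone fully self-similar. Your parenthetical ``so the shift $R_k\mapsto 2R_k$ is the same subsequence'' is incorrect: the subsequence $\{k_j\}$ extracted by Arzel\`a--Ascoli need not be shift-invariant, so $u_{2R_{k_j}}$ may converge to a \emph{different} limit $(u_\infty)_2\ne u_\infty$. Upgrading this to full self-similarity is worse, not better: you would need a rigidity mechanism such as an Almgren- or Pohozaev-type monotonicity formula along dilations, and no such formula is supplied (nor is one obvious for this nonlinearity with the Hardy weight). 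Without it, the reduction to a self-similar $u_\infty=r^\gamma v(\theta)$ is unjustified, and everything downstream rests on it. The remedy is either to supply a genuine monotonicity argument, or to abandon the blow-up and argue as the paper does: note that $p>p_c$ places the prescribed growth strictly below the $H_-$ barrier, invoke the Phragm\'en--Lindel\"of comparison to conclude $u\lesssim x_1^{\alpha_+}$ locally, and contradict the strong singularity. A secondary, more minor issue: the vanishing of the boundary term at $\theta=\pi/2$ in your integration by parts is asserted via ``Frobenius regularity,'' which again presupposes that $v$ solves the separable ODE; once the reduction is repaired this becomes the paper's hypothesis that $\int_0^1\sigma\dot v^2$ and $\int_0^1 v^2/t^2$ are finite, which should be verified rather than assumed.
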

%%%%%%%%%%%%%%%%%%%%%tttttttt

\begin{remark}
Using potential theoretical techniques in the spirit of \cite{Marcus-Nguen,MM} one can extend the above nonexistence result
to the critical case $p=p_c$. However this would go beyond the scope of the present work.
\end{remark}

Next we construct separable solutions with a strong singularity at the origin, that behave like large $\L_\mu$-harmonics at the rest of the boundary.
Such solutions exist for a wider range $1<p<p_{KO}$ and they are not unique.
\begin{theorem}\label{thm-main-minus}
Let $-\infty<\mu<1/4$ and $1<p<p_{KO}$. Then \eqref{P} admits a positive solution $u$ such that
\begin{equation}\label{e-minus}
c^{-1} x_1^{\alpha_-}|x|^{-\alpha_- - \frac{2}{p-1}}\leq u(x)\leq c x_1^{\alpha_-}|x|^{-\alpha_- -\frac{2}{p-1}} \qquad \forall x\in \R^n_+.
\end{equation}
\end{theorem}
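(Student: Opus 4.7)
The strategy is to look for a separable positive solution $u(x) = r^{-2/(p-1)}\omega(\theta)$, where $r=|x|$ and $\theta$ denotes the polar angle from the $x_1$-axis, $\theta\in(0,\pi/2)$. Substituting this ansatz into \eqref{P} reduces the problem to the one-dimensional boundary-value problem
\[-\omega'' - (n-2)\cot\theta\,\omega' - \frac{\mu}{\cos^2\theta}\omega + A\omega + \omega^p = 0, \qquad \theta\in(0,\pi/2),\]
with $A = \tfrac{2}{p-1}\bigl(n-2-\tfrac{2}{p-1}\bigr)$, smooth at $\theta=0$, and with the asymptotic $\omega(\theta) \asymp \cos^{\alpha_-}\theta$ as $\theta\to \pi/2$; this is precisely \eqref{e-minus} rewritten in separable form.

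I plan to obtain such an $\omega$ by the sub- and supersolution method. Since the Keller--Osserman function $U_*$ of \eqref{e-Ustar} is an explicit solution of \eqref{P} throughout the range $p<p_{KO}$, the profile $\bar\omega(\theta) = C_{p,\mu}\cos^{-2/(p-1)}\theta$ is a valid supersolution (when $p>p_c$ the sharper $\L_\mu$-harmonic $H_{-2/(p-1)}$ from Theorem~\ref{t-phs} also works, since any positive $\L_\mu$-harmonic $H$ automatically satisfies $-\L_\mu H + H^p = H^p\ge 0$). For the subsolution, the natural candidate $\underline\omega = \delta\cos^{\alpha_-}\theta$ produces, via the $\L_\mu$-harmonicity of $x_1^{\alpha_-}$, a residue
\[\delta\cos^{\alpha_-}\theta\bigl[B + \delta^{p-1}\cos^{(p-1)\alpha_-}\theta\bigr],\qquad B = \bigl(\alpha_-+\tfrac{2}{p-1}\bigr)\bigl(\alpha_-+n-2-\tfrac{2}{p-1}\bigr).\]
The first factor of $B$ is positive on the whole range $p<p_{KO}$ (this is a reformulation of $p<p_{KO}$); in the subrange where the second factor is negative and $\alpha_-\geq 0$, choosing $\delta$ small makes $\underline\omega$ a genuine subsolution, and $\delta\le C_{p,\mu}$ ensures $\underline\omega\le \bar\omega$ on $(0,\pi/2)$.

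With sub- and supersolution in place, I would carry out monotone iteration on expanding subdomains $\Omega_R = \R^n_+\cap\{R^{-1}<|x|<R\}$ (matching $\bar u$ and $\underline u$ on $\partial\Omega_R$) and extract, via standard elliptic regularity and local compactness, a positive solution $u$ of \eqref{P} on $\R^n_+$ satisfying $\underline\omega\le \omega\le \bar\omega$ in separable form. The lower bound in \eqref{e-minus} is immediate from $\underline\omega$; the upper bound is direct when $\bar\omega = H_{-2/(p-1)}$, and otherwise is recovered from ODE asymptotics at $\theta=\pi/2$: the indicial exponents of the linearised equation are exactly $\alpha_\pm$, so a positive profile already bounded below by $c^{-1}\cos^{\alpha_-}\theta$ cannot contain the faster-blowing-up mode $\cos^{\alpha_+}\theta$ and must satisfy $\omega = O(\cos^{\alpha_-}\theta)$ near the boundary.

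The principal obstacle is that the naive subsolution $\delta H$ fails in the remaining parts of the regime $1<p<p_{KO}$, most notably when $\mu<0$ (so $\alpha_-<0$ and $\cos^{(p-1)\alpha_-}\theta$ is unbounded near $\theta=\pi/2$) or when $B\ge 0$. Overcoming this requires a more refined construction --- for example, replacing $\underline\omega$ near the boundary by a truncated profile that retains a lower bound of order $\cos^{\alpha_-}\theta$ on a fixed compact subinterval of $(0,\pi/2)$, and then using the scaling invariance $u(x)\mapsto \lambda^{2/(p-1)}u(\lambda x)$ of \eqref{P} together with the strong maximum principle to propagate the boundary rate of $u$ to the limiting solution.
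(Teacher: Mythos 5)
Your separable ansatz and the sub/supersolution framework are exactly the paper's starting point, but the specific barriers you propose do not cover the full range $-\infty<\mu<1/4$, $1<p<p_{KO}$, and the patches you sketch do not close the gaps. Concretely: the naive subsolution $\delta\cos^{\alpha_-}\theta$ works only when $\alpha_-\ge 0$ (so $\mu\ge 0$) and $\Lambda(-\tfrac{2}{p-1})-\Lambda(\alpha_-)>0$; you diagnose the failure for $\mu<0$ correctly, but ``replace by a truncated profile'' is not a proof. The paper's actual subsolution throughout the regime is $\tau t^{\alpha_-}(1-\kappa t^\epsilon)_+$ with $\kappa>1$ and $\epsilon$ chosen so that $2\alpha_-+\epsilon-1<0$ and $\alpha_-(p-1)+2-\epsilon>0$; the identity \eqref{identity} then makes the residual term have a definite sign in $(0,\kappa^{-1/\epsilon}]$ for small $\tau$, and the subsolution is extended by zero. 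This computation is the crux of the argument and is absent from your proposal.

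The supersolution side has a more serious flaw. For $p<p_c$ you fall back on $\bar\omega=C_{p,\mu}\cos^{-2/(p-1)}\theta$, which is a supersolution but has blowup rate $-2/(p-1)<\alpha_-$; the sandwich then gives $\omega\le C_{p,\mu}\cos^{-2/(p-1)}\theta$, \emph{not} the claimed upper bound in \eqref{e-minus}. Your proposed remedy --- an a posteriori upgrade via indicial exponents --- is not valid: $\alpha_+>0$ so $\cos^{\alpha_+}\theta$ is not the ``faster-blowing-up mode,'' and more fundamentally the nonlinear equation genuinely admits $U_*$ with the rate $\cos^{-2/(p-1)}\theta$, so boundedness below by $\cos^{\alpha_-}\theta$ cannot force an $O(\cos^{\alpha_-}\theta)$ upper bound. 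The $\alpha_-$ upper bound must be built into the supersolution. The paper does this case by case: when $\Lambda(-\tfrac{2}{p-1})-\Lambda(\alpha_-)\le 0$ the supersolution is simply $ct^{\alpha_-}$; when it is positive the supersolution is $ct^{\alpha_-}(1+\kappa t^\epsilon)$ with $c,\kappa$ large, again using \eqref{identity}. Your approach is a reasonable first attempt, but without the explicit perturbed profiles and the sign analysis of $\Lambda_0^-$ it does not constitute a proof of the theorem.
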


In view of the Keller--Osserman a priori bound, which is also valid for \eqref{P} (see Proposition \ref{p-KO}), this result is sharp,
as for $p>p_{KO}$ equation \eqref{P} does not admit positive solutions with strong singularities.
%%%%%%%%%%%%%%%%TTTTTTT
\begin{theorem}\label{t-non-KO}
Let $-\infty<\mu<1/4$ and $p>p_{KO}$.
Then \eqref{P} does not admit positive solutions which satisfy \eqref{e-minus}.
\end{theorem}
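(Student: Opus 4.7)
The plan is to confront the assumed lower bound in \eqref{e-minus} with the Keller--Osserman a priori upper bound supplied by Proposition \ref{p-KO}, and to extract a contradiction by sending the ratio $x_1/|x|$ to zero. I would begin by observing that the hypothesis $p>p_{KO}$ is vacuous unless $\mu<0$ (since $p_{KO}=+\infty$ for $\mu\ge 0$), so throughout the argument I may assume $\mu<0$, whence $\alpha_-<0$. Unwinding the definition $p_{KO}=1-2/\alpha_-$ gives $\alpha_-(p_{KO}-1)=-2$, and monotonicity of $2/(p-1)$ in $p$ yields the single algebraic fact needed below:
\[
\beta \;:=\; \alpha_- + \frac{2}{p-1} \;<\; 0 \qquad\text{whenever } p>p_{KO}.
\]

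Next I invoke Proposition \ref{p-KO}. Its applicability here rests on the fact that for $\mu<0$ any positive solution of \eqref{P} satisfies $\Delta u = u^p-\mu u/x_1^2 \ge u^p$ pointwise, which is precisely the hypothesis required to place the classical Keller--Osserman radial blow-up barrier in the ball $B_{x_1}(x)\subset\R^n_+$, yielding the a priori bound
\[
u(x) \;\le\; C\, x_1^{-\frac{2}{p-1}}\qquad\forall\,x\in\R^n_+,
\]
for some $C=C(n,p,\mu)>0$. Combining this with the prescribed lower bound \eqref{e-minus},
\[
c^{-1}\, x_1^{\alpha_-}\, |x|^{-\alpha_- - 2/(p-1)} \;\le\; u(x)\;\le\; C\, x_1^{-2/(p-1)}\qquad\forall\,x\in\R^n_+,
\]
and rearranging, I would obtain
\[
\left(\frac{x_1}{|x|}\right)^{\!\beta} \;\le\; C\,c \qquad\forall\,x\in\R^n_+.
\]

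Finally, since $\beta<0$ and the ratio $x_1/|x|\in(0,1]$ can be made arbitrarily small (for instance along $x=(x_1,r,0,\dots,0)$ with $r>0$ fixed and $x_1\to 0^+$), the left-hand side blows up while the right-hand side stays constant, a contradiction. The whole argument is essentially a one-liner once Proposition \ref{p-KO} is granted; the only point that really needs attention is the sign analysis of $\beta$, which confirms that the lower bound in \eqref{e-minus} overtakes the Keller--Osserman upper bound exactly when $p$ crosses $p_{KO}$.
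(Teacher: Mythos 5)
Your proof is correct and takes essentially the same approach as the paper, which disposes of Theorem \ref{t-non-KO} in one line by combining the Keller--Osserman a priori bound of Lemma \ref{p-KO} with the lower bound in \eqref{e-minus}; you supply the short computation (the sign of $\beta=\alpha_-+2/(p-1)$, and sending $x_1/|x|\to 0$) that makes the contradiction explicit. The only cosmetic difference is that instead of applying the paper's localized Lemma \ref{p-KO} (and then rescaling to get the uniform bound $u\le Cx_1^{-2/(p-1)}$ on all of $\R^n_+$), you observe that $p>p_{KO}$ forces $\mu<0$, in which case $\Delta u\ge u^p$ and the classical interior Keller--Osserman barrier in $B_{x_1}(x)$ gives the scale-invariant bound directly — a slightly cleaner route to the same estimate.
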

%%%%%%%%%%%%%%%%%%%%%ttttttttt

\begin{remark}
For every $p\in(p_c,p_{KO})$ the solution $u=r^{-\frac{2}{p-1}}v(\theta)$ constructed in Theorem \ref{thm-main-minus} is dominated by the $\L_\mu$-harmonics $H_\gamma$ with $\gamma={-\frac{2}{p-1}}$, constructed in Theorem \ref{t-phs}. This follows directly from the comparison of the bounds \eqref{e-minus} and \eqref{e-gamma-h}.
Therefore, in the {\em supercritical} range $p_c< p< p_{KO}$ the solution $u$ is a {\em moderate} solution of the nonlinear problem \eqref{P},
in the sense of \cite{Marcus-Nguen} or \cite{MM}.
At the same time, the solution $U_*=C_{p,\mu}x_1^{-\frac{2}{p-1}}$ described in \eqref{e-Ustar} is not dominated by a positive $\L_\mu$-harmonic for any $p\in(1,p_{KO})$. To see this, recall that as a consequence of the Phragm\'en--Lindel\"of comparison principle, any positive $\L_\mu$-harmonic $h$ must satisfy $\liminf_{x_1\to 0}h(x)x_1^{-\alpha_-}<+\infty$, cf. \cite[Theorem 2.6]{BaMoRe08}, which contradicts to $p<p_{KO}$.
\end{remark}

Finally, we show that for the subcritical values of $1<p<p_c$ one can construct
solutions with strong singularities on arbitrary compact subsets of the boundary.

\begin{theorem}\label{thm-general}
Let $-\infty<\mu<1/4$ and $1<p<p_c$.
Let $F\subset\partial\R^n_+$ be a closed set.
Then there exists a solution $U$ of \eqref{P} such that $U=0$ on $\partial\R^n_+\setminus F$ and, for every $(0,\xi)\in F$,
\begin{equation}\label{gen1}
c^{-1}x_1^{-\frac{2}{p-1}}\leq U(x_1,\xi)\leq cx_1^{-\frac{2}{p-1}} \quad\forall x_1\in (0,1),
\end{equation}
where $c>0$ depends only on $\mu,p$. Furthermore,
\begin{equation}\label{gen2}
c^{-1}x_1^{\alpha_+}\mathrm{dist}(x,F)^{-\alpha_+ -\frac{2}{p-1}}\leq U(x)\leq c x_1^{\alpha_+}\mathrm{dist}(x,F)^{-\alpha_+ -\frac{2}{p-1}}, \quad  \quad \forall x\in \R^n_+
\end{equation}
with $c$ as before.
\end{theorem}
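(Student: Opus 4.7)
My plan is to use the sub/super-solution method with both barriers built from translates of the separable solution in Theorem~\ref{thm-main}. For $\xi\in\R^{n-1}$ set
\[
u_\xi(x):=u(x-(0,\xi)).
\]
By translation invariance of \eqref{P} along $\partial\R^n_+$, each $u_\xi$ is a positive solution of \eqref{P} with a strong singularity at $(0,\xi)$ and $u_\xi(x)\asymp x_1^{\alpha_+}|x-(0,\xi)|^{-\alpha_+-2/(p-1)}$ by \eqref{e-plus}. Define the subsolution
\[
\underline U(x):=\sup\bigl\{u_\xi(x):(0,\xi)\in F\bigr\},
\]
which is a subsolution of \eqref{P} since a pointwise supremum of positive solutions of a semilinear equation with monotone nonlinearity is a subsolution. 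Using $|x-(0,\xi)|\geq\rho(x):=\mathrm{dist}(x,F)$ for each $(0,\xi)\in F$, with equality at the nearest point $\xi^*\in F$, the estimate \eqref{e-plus} immediately yields
\[
c^{-1}x_1^{\alpha_+}\rho(x)^{-\alpha_+-2/(p-1)}\leq\underline U(x)\leq c\,x_1^{\alpha_+}\rho(x)^{-\alpha_+-2/(p-1)},
\]
which specialises along vertical rays over $F$ (where $\rho=x_1$) to \eqref{gen1}.

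For a finite set $F=\{(0,\xi_1),\ldots,(0,\xi_N)\}$, the function $\overline U:=\sum_{i=1}^{N}u_{\xi_i}$ is a supersolution of \eqref{P}: by superadditivity of $t\mapsto t^p$ for $p>1$ and $a_i\geq 0$,
\[
-\L_\mu\overline U+\overline U^p=\sum_i(-\L_\mu u_{\xi_i})+\overline U^p=-\sum_i u_{\xi_i}^p+\Big(\sum_i u_{\xi_i}\Big)^p\geq 0.
\]
Since $\underline U\leq\overline U$, standard monotone iteration (or Perron's method) produces a solution $U_N$ with $\underline U\leq U_N\leq\overline U$; both inequalities of \eqref{gen2} then hold and $U_N=0$ on $\partial\R^n_+\setminus\{(0,\xi_i)\}$ follows from $\overline U=0$ there. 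For general closed $F$, approximate by an increasing sequence of finite sets $F_k\subset F$ with $\bigcup_k F_k$ dense in $F$, and let $U_k$ be the minimal Perron solution above $\sup_{\xi\in F_k}u_\xi$. The monotonicity $U_k\leq U_{k+1}$ (the family of admissible supersolutions shrinks as $k$ grows) together with the uniform crude bound $U_k\leq MU_*$ (valid for $M$ chosen so that $MU_*\geq\underline U$: this uses $\underline U\leq c\,x_1^{-2/(p-1)}$, which comes from the estimate above via $\rho\geq x_1$, and the fact that $MU_*$ is a supersolution of \eqref{P} for $M\geq 1$) guarantees that $U:=\lim_k U_k$ is a bona fide solution of \eqref{P} with $U\geq\underline U$, yielding the lower inequalities of \eqref{gen1} and \eqref{gen2}.

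The technical heart of the proof is the matching sharp upper bound in \eqref{gen2}, from which the vanishing of $U$ on $\partial\R^n_+\setminus F$ is immediate. The finite-$F$ supersolution $\sum_i u_{\xi_i}$ diverges in the limit (the corresponding Riesz-type integral $\int_F|x-(0,\xi)|^{-\alpha_+-2/(p-1)}d\sigma(\xi)$ fails to converge for $p<p_c$), while the crude bound $U_k\leq MU_*$ is far too lossy. I would obtain the sharp bound through a localised comparison: near each $x\in\R^n_+$ with nearest boundary point $\xi^*\in F$, dominate $U$ by a constant multiple of $u_{\xi^*}$ by invoking a Phragm\'en--Lindel\"of type principle together with the Keller--Osserman a priori estimate (Proposition~\ref{p-KO}), using the subcritical condition $p<p_c$ to preserve the boundary decay rate $x_1^{\alpha_+}$. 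Making this local argument uniform in $x$ and matching constants to produce the global estimate $c\,x_1^{\alpha_+}\rho(x)^{-\alpha_+-2/(p-1)}$ is the main obstacle I anticipate.
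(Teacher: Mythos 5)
Your construction mirrors the paper's: translates $u_\xi$ of the separable solution from Theorem~\ref{thm-main}, $\max$ as subsolution, $\sum$ as supersolution over finite subsets, approximation of $F$ by a countable dense subset, and passage to the limit. The lower bounds in \eqref{gen1} and \eqref{gen2} are obtained correctly, essentially as in the paper. But the upper bound in \eqref{gen2} --- which, as you recognize, is the technical heart --- is not established, and the route you gesture at will not work.

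Two concrete problems. First, even for finite $F=\{\xi_1,\dots,\xi_N\}$, the bound $U_N\le\sum_i u_{\xi_i}$ does \emph{not} give the right-hand inequality of \eqref{gen2} with a constant depending only on $\mu,p$: at a point $x$ roughly equidistant from all $N$ singular points, $\sum_i u_{\xi_i}(x)$ is of order $N\cdot x_1^{\alpha_+}\rho(x)^{-\alpha_+-2/(p-1)}$, so the constant degrades with $N$ and blows up along your approximating sequence. Your claim that ``both inequalities of \eqref{gen2} then hold'' for finite $F$ is therefore already incorrect. Second, the Phragm\'en--Lindel\"of lemma (Lemma~\ref{l-PL}) does not repair this: its conclusion $h\le C_\rho x_1^{\alpha_+}$ inherits the constant $C_\rho$ from a limsup bound on $h/x_1^{\alpha_+}$ in the outer annulus $\rho<|x|<R$, and for your approximants $V_m$ that limsup is only controlled by $\sum_i u_{\xi_i}$, which again grows with $m$. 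Phragm\'en--Lindel\"of plus Keller--Osserman can propagate a rate inward, but it cannot manufacture the uniform-in-$m$ constant you need.

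The paper's proof does something genuinely different and stronger at this step. After rescaling by $a=d_F(P)/4$ so that the rescaled function $w=V_k^P$ vanishes on a unit-size boundary neighbourhood of $P_a$, it uses the Keller--Osserman bound to control $w$ at interior height $z_1\sim 1$ by a universal constant, and then invokes Ancona's \emph{boundary Harnack principle} for the Schr\"odinger operator $\L_\mu^w=\Delta-\mu z_1^{-2}+w^{p-1}$ to transfer that interior bound down to the boundary with the rate $z_1^{\alpha_+}$: one compares $w$ with the Green function $G_\mu^w(\cdot,Z^*)$, obtains $w(z)\le C\,G_\mu^w(z,Z^*)\le C\,G_\mu(z,Z^*)\le C'z_1^{\alpha_+}$, and undoes the scaling. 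The crucial feature of BHP, which Phragm\'en--Lindel\"of lacks, is that the constant is controlled by the value of $w$ at a \emph{single interior point}, where Keller--Osserman gives a bound independent of $k$. Without identifying this mechanism (or an equivalent one), the proof has a genuine gap exactly where you anticipated an obstacle.
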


The separable solutions play an important role in our consideration.
In Section \ref{s-sep-lin} we start with the construction and discussion of separable $\L_\mu$-harmonics.
Our main result here is a detailed characterisation of admissible singularities at the origin
of separable $\L_\mu$-harmonics which are small (Proposition \ref{thm:h1}) and large (Proposition \ref{phs})
on the boundary and away from the origin. A particular case was presented in Theorem \ref{t-phs}
but we believe that the consideration of $\L_\mu$-harmonics in Section \ref{s-sep-lin} could be of independent interest.
\smallskip

In Section \ref{s-sep-nonlin} we construct separable solutions for the nonlinear problem \eqref{P}
that satisfy the assumptions of Theorems \ref{thm-main} and \ref{thm-main-minus}.
In Section \ref{s-PL} we prove nonexistence results of Theorems \ref{thm-nonexist} and \ref{t-non-KO},
while in Section \ref{s-general} we present some results on solutions with a general singular set
and prove Theorem \ref{thm-general}.

%%%%%%%%%%%%%%%%%%%%%%%%%%%%%%%%%%%%%%%%%%%%%%%%%%%%%%%%%%%%%%%%%%%%%%%%%%%%%%%%%%%%%SSSSSSSSSSSSSSSSSSSSSSSSSSSSSSSSSSSSSSSSSSSS
\section{Construction of separable $\L_\mu$-harmonics}\label{s-sep-lin}
\subsection{Polar coordinates}\label{sect-polar}
We introduce the polar coordinates
$$
\left\{
\begin{array}{l}
x_n=r(\prod_{j=1}^{n-2}\sin\theta_j)\sin\phi,\vspace{10pt}\\
x_{n-1}=r(\prod_{j=1}^{n-2}\sin\theta_j)\cos\phi, \vspace{10pt}\\
x_k=r(\prod_{j=1}^{k-2}\sin\theta_j)\cos\theta_k,\quad k=3,4,\dots, n-2,\vspace{10pt}\\
x_1=r\cos\theta_1,
\end{array}
\right.
$$
where $r>0$, $0\leq \theta_j\le \pi$ ($j=1,2,3,\dots, n-2$) and
$0\le \phi\leq 2\pi$. In these coordinates the Laplacian is expressed as
$$
\Delta =\Delta_r +\frac{1}{r^2} \Delta_{S^{n-1}},
$$
where
$ \Delta_r =r^{1-n}\partial_r( r^{n-1}\partial _r)$ and $\Delta_{\S^{n-1}}$ is the Laplace-Beltrami operator on
${\mathcal{S}}^{n-1}$.  It can be determined recursively as follows. Set  $t=\cos(\theta_1)$ and let $\eta\in \S^{n-2}$. Then for $n\geq 2$, $x=(rt,r\sqrt{1-t^2}\eta)$ and
$$
\Delta_{\S^{n-1}}=\underbrace{\frac{1}{(1-t^2)^{\frac{n-3}{2}}} \frac{\partial}{\partial t}\left((1-t^2)^{\frac{n-1}{2}}\frac{\partial}{\partial t}\right)}_L+\frac{1}{1-t^2}\Delta_{\S^{n-2}}.
$$
Our goal is to look for solutions of \eqref{harmonic} and \eqref{P} of the form
$$r^\gamma k(t)p(\eta).$$
They will be called {\em separable} $\L_\mu$-harmonics and {\em separable} solutions, respectively.
%%%%%%%%%%%%%%%%%sssssssssssssssssssssssssssssssssssss
%%%%%%%%%%%%%%%%%SSSSSSSSSSSSSSSSSSS
\subsection{Separable $\L_\mu$-harmonics}
In what follows we denote
\begin{equation}\label{e-Lambda}
\Lambda(\gamma):=\gamma(\gamma+n-2).
\end{equation}
Since $x_1=rt$, separable $\L_\mu$-harmonics
$$h(x)=r^\gamma k(t)p(\eta)$$
of \eqref{harmonic} satisfy,
$$
\frac{L k}{k} +\frac{\mu}{t^2}+ \frac{1}{1-t^2}\frac{ \Delta_{S^{n-2}}p}{p}+\Lambda(\gamma) =0.
$$
This implies that
$$
\frac{ \Delta_{\S^{n-2}}p}{p}=\rm{const}\quad  \text{on $\S^{n-2}$}.
$$
If $n=2$, this term does not exist.
Since we are looking for solutions in the half-space,  $p$ is an eigenfunction on $\S^{n-2}$. It is well-known that the eigenvalues are
$$\nu_m:=m(m+n-3),\quad m=0,1,2,\dots.$$
Replacing $p^{-1} \Delta_{\S^{n-2}}p$ by $-\nu_m$, we obtain  for $k$ the differential equation
\begin{align}\label{eq:k}
(1-t^2)\ddot{k}&-(n-1)t\dot{k} + \frac{\mu}{t^2} k +  k\left(\Lambda(\gamma) -\frac{\nu_m}{1-t^2}\right)=0 \tx{in} (0,1), \\
\nonumber &\tx{where} \dot{}=\frac{d}{dt}.
\end{align}
This equation is of Fuchsian type and has singularities at $t=0$ and at $t=\pm1$, see \cite[Chapter 4]{Hartman}.
Its solutions can be expressed in terms of hypergeometric functions.
Since in our case $t$ varies in $[0,1]$, we are only interested in the local behavior of the solutions in $t=0$ and $t=1$.
It is determined by the indicial equations.

The indicial equation at $t=0$ is
\begin{align}\label{indicial0}
\alpha(\alpha-1)+\mu=0.
\end{align}
As in \eqref{roots}, its roots are given by
$$
\alpha_\pm:=\frac{1}{2} \pm \sqrt{\frac{1}{4}-\mu}.
$$
Because for $\mu<1/4$, $\alpha_-<\alpha_+$, there exists a regular solution $k(t)$ near $t=0$ of the form $t^{\alpha_+} f_0(t)$,
where $f_0(t)$ is an analytic function in $(0,1)$ such that $f_0(0)\neq 0$.
In addition there is a singular solution near $t=0$ of the form $t^{\alpha_-} f_1(t)$ where $f_1(t)$ is an analytic function in $[0,1)$ such that $f_1(0)\neq 0$.
\medskip

The indicial equation at $t=1$ is
\begin{align*}
\kappa^2 +\kappa \left(\frac{n-3}{2}\right) -\frac{\nu_m}{4}=0.
\end{align*}
Its roots are
$$
\kappa_\pm= \pm\sqrt{\frac{\nu_m}{4} +\left(\frac{n-3}{4}\right)^2} -\frac{n-3}{2}.
$$
Observe that for $m\neq 0$, $\kappa_+>0$ which implies that $k(t)$ vanishes at $t=1$.
We are looking for a separable $\L_\mu$-harmonic which is regular in $\mathbb{R}^n_+$. This means that we are only interested in those solutions which behave like $k(t)\sim(1-t)^{\kappa_+}$ near $t=1$.
\medskip

At the origin $k(t)$ behaves either like $t^{\alpha_+}$ or  $t^{\alpha_-}$. We say that $k(t)$ is {\em regular, singular} at the origin, respectively. We shall discuss the two cases separately.
%%%%%%%%%%%%%%%%%%%%%%%%
\subsubsection{Separable $\L_\mu$-harmonics that are regular on $\partial \mathbb{R}^n_+\setminus \{0\}$}
Problem \eqref{eq:k} with $\gamma\in\R$ can be written as
\begin{equation}\label{eq:k'}
\frac{d}{dt} \left (\sigma(t)\dot k_\gamma\right) +\frac{\sigma(t)}{1-t^2}\left( \frac{\mu}{t^2} +\Lambda (\gamma)-\frac{\nu_m}{1-t^2}\right)k_\gamma =0,\tx{where}\sigma(t):= (1-t^2)^{\frac{n-1}{2}}.
\end{equation}
This equation could be interpreted as an eigenvalue problem, weakly formulated as
\begin{align}\label{eq:eigenvalue}
\int_0^1\sigma \dot{\varphi}\dot \psi\:dt-\int_0^1\frac{\sigma(t)}{1-t^2} \left(\frac{\mu}{t^2}-\frac{\nu_m}{1-t^2}\right)\varphi\psi\:dt=\Lambda(\gamma)\int_0^1\frac{\sigma(t)}{1-t^2} \varphi\psi\:dt,
\end{align}
for all test functions $\psi$, where $\varphi$ and $\psi$ are in the weighted Sobolev space $W^{1,2}(0,1;\sigma, \frac{\sigma}{1-t^2})$  and are such that all integrals exist, that is they vanish at both endpoints if $\nu_m\neq 0$, and only at $t=0$ if $\nu_m=0$.

From the classical spectral theory and since $\mu<1/4$,  it follows that for each $\nu_m$, $m=0,1,2,\dots$,  there exists a countable sequence of eigenvalues
of \eqref{eq:eigenvalue},
$$0<\Lambda_{1,m}<\Lambda_{2,m}\leq \cdots.$$
It is easy to see that $\varphi(t)=t^{\alpha_+}$ is a solution of \eqref{eq:eigenvalue} for $\Lambda=\Lambda(\alpha_+)$ and  $\nu_m=0$. Since it does not change sign it is the eigenfunction corresponding to $\Lambda_{1,0}$ and it is the lowest eigenvalue of \eqref{eq:eigenvalue}. If $n=2$, then $\nu_m=0$ and we have only one series of eigenvalues $\Lambda_{j,0}$, $j=1,2\dots.$

\begin{definition}
$\mathcal{H}_{\mu,0}$ denotes the class of separable $\L_\mu$-harmonics $h$ that for any $R>0$ behave like
$$c^{-1}x_1^{\alpha_+}\leq h(x)\leq c x_1^{\alpha_+} \tx{as $R^{-1}\le|x|\le R$ and $x_1\to 0$.}$$
Here $c$ is a positive constant depending in general on $R$.
\end{definition}

%%%%%%%%%%%%%%%%%%%%%ddddddddd
We are now in a position to construct $\L_\mu$-harmonics in $\mathcal{H}_{\mu,0}$.
\begin{enumerate}
\item Choose $m\in \N_0:=\mathbb{N}\cup\{0\}$ and let  $p_m(\eta)$ be an eigenfunction of $\S^{n-2}$ corresponding to the eigenvalue $\nu_m= m(m+n-3)$. (The multiplicity of $\nu_m$ is, except for $m=0$, larger than one).
\item Choose $s\in \mathbb{N}\cup\{0\}$ and let $\varphi_{s,m}$ be the eigenfunction corresponding to $\Lambda_{s,m}$.
\item Determine $\gamma$ such that $\Lambda(\gamma)=\Lambda_{s,m}$. This leads to two roots
\begin{equation}\label{gamma}
\gamma_\pm(s,m):= \pm\sqrt{\Lambda_{s,m} +\left(\frac{n-2}{2}\right)^2}-\frac{n-2}{2}.
\end{equation}
\item Then the functions
\begin{align}\label{H_0}
h_+= r^{\gamma_+}\varphi_{s,m}(\cos(\theta_1))p_m(\eta) \tx{and} h_-=r^{\gamma_-}\varphi_{s,m}(\cos(\theta_1))p_m(\eta)
\end{align}
belong to $\mathcal{H}_{\mu,0}$.
\end{enumerate}
Since the indicial equation \eqref{indicial0} is independent of $\Lambda(\gamma)$, all eigenfunctions $\varphi_{s,m}$ behave at the boundary like $t^{\alpha_+}$.
In summary we have obtained the following.
%%%%%%%%%%%%%%%%%%%%%%%%TTTTTTTTTTTTTTTTTTTTTTT
\begin{proposition}\label{thm:h1}
(i) All separable $\L_\mu$-harmonics in $\mathcal{H}_{\mu,0}$ are given by \eqref{H_0}.
Near the boundary and away from the origin, we have
$$\lim_{\theta_1 \to \pi/2}\frac{\varphi_{s,m}(\cos(\theta_1))}{\cos^{\alpha_+}(\theta_1)} = c>0,$$
for all $(s,m)\in\N_0\times\N_0$.
If $m\neq 0$ then we have $\varphi_{s,m}(1)=0$ and therefore $h_\pm$ vanishes on the whole $x_1-$axis.
In particular, all $\L_\mu$-harmonics corresponding to $(s,m)\neq (1,0)$ change sign.
\medskip

(ii) The only positive separable $\L_\mu$-harmonics in $\mathcal{H}_{\mu,0}$ are given by
\begin{equation}\label{harmonics1}
h_+=x_1^{\alpha_+} \tx{and}
h_-=r^{-(\alpha_++n-2)}\cos(\theta_1)^{\alpha_+}.
\end{equation}
\end{proposition}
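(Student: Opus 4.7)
The plan is to assemble the construction carried out in the paragraphs preceding the statement, then run a standard sign analysis to identify the positive representatives in part (ii).

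For (i), I would first observe that any $h\in\mathcal{H}_{\mu,0}$ of separable form $h=r^\gamma k(t)p(\eta)$ must, by the separation of variables already performed, have $p$ an eigenfunction of $-\Delta_{\S^{n-2}}$ with eigenvalue $\nu_m=m(m+n-3)$ and $k$ a solution of \eqref{eq:k'}. The growth condition $x_1^{\alpha_+}$ defining $\mathcal{H}_{\mu,0}$, combined with $x_1=rt$, forces $k(t)\sim t^{\alpha_+}$ at $t=0$, which selects the regular Frobenius solution of the indicial equation \eqref{indicial0}; classical regularity of $h$ along the positive $x_1$-axis, where $t=1$, selects the regular local solution $k\sim(1-t)^{\kappa_+}$ at the opposite endpoint. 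With these two boundary behaviours imposed, \eqref{eq:k'} is exactly the Sturm-Liouville problem \eqref{eq:eigenvalue}, whose spectrum is the sequence $0<\Lambda_{1,m}<\Lambda_{2,m}\leq\cdots$ with eigenfunctions $\varphi_{s,m}$. Given $(s,m)$, the equation $\Lambda(\gamma)=\Lambda_{s,m}$ yields the two roots $\gamma_\pm(s,m)$ of \eqref{gamma}, producing the two separable harmonics in \eqref{H_0}, so the list exhausts $\mathcal{H}_{\mu,0}$.

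The boundary asymptotic is then immediate: the indicial equation \eqref{indicial0} depends on neither $\Lambda(\gamma)$ nor $\nu_m$, so the regular Frobenius solution has the form $t^{\alpha_+}f_0(t)$ with $f_0(0)\neq 0$, giving the stated limit as $\theta_1\to\pi/2$. For $m\neq 0$ one has $\nu_m>0$, hence $\kappa_+>0$ at $t=1$, forcing $\varphi_{s,m}(1)=0$; together with the identification of the positive $x_1$-axis with $\{t=1\}$, this yields vanishing of $h_\pm$ on the axis.

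For (ii), positivity of $h=r^\gamma\varphi_{s,m}(\cos\theta_1)p_m(\eta)$ on the whole of $\R^n_+$ requires each factor to be of constant sign. A spherical harmonic $p_m$ of degree $m\geq 1$ on $\S^{n-2}$ necessarily changes sign by orthogonality to the constant function, so $m=0$ and $p_0$ is a constant. For $m=0$, standard Sturm-Liouville oscillation theory applied to \eqref{eq:eigenvalue} forces the sign-preserving eigenfunction to be the ground state $\varphi_{1,0}$; direct substitution shows that $t^{\alpha_+}$ solves \eqref{eq:k'} with $\nu_0=0$ and eigenvalue $\Lambda(\alpha_+)$, so by uniqueness of the positive ground state $\Lambda_{1,0}=\Lambda(\alpha_+)$ and $\varphi_{1,0}(t)=t^{\alpha_+}$. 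The two roots of $\Lambda(\gamma)=\Lambda(\alpha_+)$ are then $\alpha_+$ and $-(\alpha_++n-2)$, which via $x_1=r\cos\theta_1$ produce the two explicit harmonics \eqref{harmonics1}.

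The main obstacle I expect is the rigorous setup of the Sturm-Liouville problem: the weights $\sigma(t)$ and $\sigma(t)/(1-t^2)$ degenerate at the endpoints, the Hardy-type term $\mu/t^2$ can have either sign over the full range $\mu<1/4$, and the correct boundary conditions (a weighted Dirichlet-type condition at $t=0$ encoding the $t^{\alpha_+}$ behaviour, and vanishing at $t=1$ when $\nu_m\neq 0$) have to be built into the quadratic form before the spectral theorem can be applied. Once this is handled, the remaining work is a routine collation of the separation of variables, Frobenius analysis, and oscillation theorem already written above.
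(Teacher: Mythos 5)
Your argument is correct and mirrors the paper's own development almost step for step: the paper does not give a separate proof of Proposition~\ref{thm:h1} but instead states it as a summary ("In summary we have obtained the following") of the separation-of-variables, Frobenius, and Sturm--Liouville construction carried out in the two pages preceding it, which is precisely what you assemble. The points you identify as potential obstacles — the degenerate weights $\sigma$ and $\sigma/(1-t^2)$ at $t=0,1$, the sign-indefinite Hardy term $\mu/t^2$, and the formulation of the correct weighted boundary conditions — are in fact glossed over in the paper, which simply invokes "the classical spectral theory" for the Sturm--Liouville problem \eqref{eq:eigenvalue}; so your caution is well placed, though it concerns a gap that the paper itself leaves open rather than one introduced by your argument. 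The one small addition you give beyond the paper's text is the explicit orthogonality argument showing that spherical harmonics of degree $m\geq 1$ change sign, which the paper leaves implicit; this is a clean and correct supplement rather than a divergence in method.
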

%%%%%%%%%%%%%%%%%%%%%%%%%%%%%%%%tttttttttttttttttttttttttttttttttttt
\begin{remark} Since $\alpha_+>0$ the function $h_-$ in \eqref{harmonics1} always has a point singularity at the origin.
\end{remark}
%%%%%%%%%%%%%%%%%%%%%%%%%%%%%%%%%%%%%%%%%%%SSSSSSSSSSSSSSS
\subsubsection{Separable $\L_\mu$-harmonics that are singular at $\partial \mathbb{R}^n_+\setminus \{0\}$}
Similar to the class $\mathcal{H}_{\mu,0}$ we introduce
\begin{definition}
$\mathcal{H}_{\mu,sing}$ is the class of separable $\L_\mu$-harmonics that for any $R>0$
behave like
$$c^{-1}x_1^{\alpha_-}\leq h(x)\leq c x_1^{\alpha_-} \tx{as $R^{-1}\le|x|\le R$ and $x_1\to 0$.}$$
Here $c$ is a positive constant depending in general on $R$.
\end{definition}

We construct separable $\L_\mu$-harmonics in $\mathcal{H}_{\mu,sing}$ for values of $\gamma$ such that $\Lambda(\gamma)\neq \Lambda_{s,m}$ (cf. \eqref{gamma}).
We shall make use of the solutions of \eqref{eq:k} that are regular at $t=1$.

If we integrate \eqref{eq:k'} from $t$ to $1$ and keep in mind that the  solutions which are regular on the $x_1$-axis satisfy $\sigma(1)\dot{k}_\gamma(1)=0$,
we obtain
$$
-\sigma(t)\dot{k}_\gamma(t) +\int_t^1\frac{\sigma(s)}{1-s^2}\left( \frac{\mu}{s^2} +\Lambda(\gamma)-\frac{\nu_m}{1-s^2}\right) k_\gamma(s)\mathrm{d}s=0,
$$
and after a second integration
\begin{align}\label{eq:kint}
k_\gamma(t)=k_\gamma(1)-\int_t^1\sigma^{-1}(\xi)\dx \xi\int_\xi^1\frac{\sigma(s)}{1-s^2}\left( \frac{\mu}{s^2} +\Lambda(\gamma)-\frac{\nu_m}{1-s^2}\right) k_\gamma(s)\dx s=:T(k).
\end{align}
It is not difficult to see that for given $k_\gamma(1)>0$ if $m=0$ or $k_\gamma(1)=0$ otherwise, and for $t_0$ near $t=1$, $T(k)$ is a contraction in the Banach space $(C[t_0,1], \|\cdot\|_\infty)$. Consequently \eqref{eq:kint} has a unique solution $k_\gamma(t)$ in $[t_0,1]$. This solution can be continued into the whole interval $(0,t_0)$. Either it vanishes at $t=0$ and behaves like $t^{\alpha_+}f_0(t)$ or it becomes singular like $t^{\alpha_-}f_1(t)$.  In the first case $k_\gamma$ is an eigenfunction of \eqref{eq:eigenvalue}. It can be excluded by our assumption on $\Lambda(\gamma)$. Hence $k_\gamma(t)$ is singular on $\partial \mathbb{R}^n_+\setminus \{0\}$ and the corresponding separable $\L_\mu$-harmonics
\begin{equation}\label{sharmonics}
H_+= r^\gamma k_\gamma(\cos(\theta_1))p_m(\eta) \tx{and} H_-=r^{-(\gamma+n-2))} k_\gamma(\cos(\theta_1))p_m(\eta),
\end{equation}
belong to $\mathcal{H}_{\mu,sing}$.

%%%%%%%%%%%%%%%TTTTTTTTTTTTTTTT
\begin{proposition}\label{phs}
(i) Let  $\gamma\in \mathbb{R}$ be such that $\Lambda(\gamma)\neq \Lambda_{s,m}$ for any $(s,m)\in\N_0\times\N_0$.
Then the functions $H_\pm$ constructed in \eqref{sharmonics} are $\L_\mu$-harmonics. They are singular near the boundary and away from the origin in the sense that
$$\lim_{\theta_1 \to \pi/2}\frac{k_\gamma(\cos(\theta_1))}{\cos^{\alpha_-}(\theta_1)} = c>0.$$
\medskip

(ii) If $\Lambda(\gamma)<\Lambda(\alpha_+)$, then
\begin{equation}\label{eq:g}
H_+=r^\gamma k_\gamma(\cos(\theta_1)) \tx{and} H_-=r^{-(\gamma+n-2)} k_\gamma(\cos(\theta_1))
\end{equation}
are positive separable $\L_\mu$-harmonics in $\mathcal{H}_{\mu,sing}$.
\medskip

(iii) For $\gamma=\alpha_-$ we have the particular positive separable $\L_\mu$-harmonics in $\mathcal{H}_{\mu,sing}$ of the form
\begin{equation*}
H_+=x_1^{\alpha_-} \tx{and} H_-= r^{-(\alpha_-+n-2)}\cos^{\alpha_-}(\theta_1).
\end{equation*}
\end{proposition}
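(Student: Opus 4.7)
The proposition has three parts; I outline the strategy for each.

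\emph{Part (i).} First I would solve the integral equation \eqref{eq:kint} near $t=1$ by a Banach fixed point argument on $(C[t_0,1],\|\cdot\|_\infty)$ for $t_0$ sufficiently close to $1$, using that $\sigma(s)=(1-s^2)^{(n-1)/2}$ and that the inner integral provides enough vanishing at $s=1$ to absorb the $\sigma^{-1}(\xi)$ in the outer integral. The resulting fixed point $k_\gamma$ is then extended to all of $(0,1]$ by standard linear ODE theory, since the only singular points of \eqref{eq:k} are $t=0$ and $t=1$, and a double differentiation of \eqref{eq:kint} recovers \eqref{eq:k'}. Near $t=0$ the Frobenius expansion reads
$$k_\gamma(t) = A\,t^{\alpha_+}f_0(t) + B\,t^{\alpha_-}f_1(t),\qquad f_0(0),\,f_1(0)\neq 0,$$
so the stated limit amounts to $B\neq 0$. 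If $B=0$, then $k_\gamma$ is regular at $t=0$ and, by the fixed point construction, already satisfies the natural condition $\sigma(1)\dot k_\gamma(1)=0$ at $t=1$ (or vanishes there in case $m\neq 0$); hence $k_\gamma$ is an admissible test function for \eqref{eq:eigenvalue} and an eigenfunction with eigenvalue $\Lambda(\gamma)$, forcing $\Lambda(\gamma)=\Lambda_{s,m}$ for some $s$, which contradicts the hypothesis. The identity $\Lambda(-(\gamma+n-2))=\Lambda(\gamma)$ shows that the same $k_\gamma$ yields both $H_+$ and $H_-$ in \eqref{sharmonics} as $\L_\mu$-harmonics.

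\emph{Part (ii).} Here $m=0$ and $\Lambda(\gamma)<\Lambda(\alpha_+)=\Lambda_{1,0}$ (the principal eigenvalue). Since $k_\gamma(1)>0$, continuity gives $k_\gamma>0$ in a left-neighborhood of $t=1$. Assume toward contradiction that $t^*:=\inf\{t\in(0,1):k_\gamma>0\text{ on }(t,1)\}>0$; then $k_\gamma(t^*)=0$, and $k_\gamma\big|_{(t^*,1)}$ is a positive solution of \eqref{eq:k} on $(t^*,1)$ with Dirichlet condition at $t^*$ and the natural condition at $t=1$. Its extension by zero to $(0,1)$ is an admissible test function for \eqref{eq:eigenvalue}, whose Rayleigh quotient equals $\Lambda(\gamma)$ but must also be $\geq\Lambda_{1,0}$, a contradiction. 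Therefore $k_\gamma>0$ throughout $(0,1)$, and $H_\pm$ are positive.

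\emph{Part (iii).} A direct substitution verifies that $k(t)=t^{\alpha_-}$ solves \eqref{eq:k} with $\gamma=\alpha_-$ and $m=0$: the $t^{\alpha_--2}$ coefficient vanishes by the indicial identity $\alpha_-(\alpha_--1)+\mu=0$, while the $t^{\alpha_-}$ coefficient reduces to $-\alpha_-(\alpha_-+n-2)+\Lambda(\alpha_-)=0$. Since $\sigma(1)=0$, the natural condition at $t=1$ holds automatically, so $t^{\alpha_-}$ coincides (up to scalar) with the $k_\gamma$ from part (i); combined with $\Lambda(-(\alpha_-+n-2))=\Lambda(\alpha_-)$ this yields both explicit forms. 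The non-resonance hypothesis of (i) holds because $\Lambda(\alpha_-)=\Lambda(\alpha_+)-(n-1)(\alpha_+-\alpha_-)<\Lambda_{1,0}\leq\inf_{s,m}\Lambda_{s,m}$.

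\textbf{Main obstacle.} The delicate step is showing $B\neq 0$ in part (i): this requires identifying a regular Frobenius solution with an eigenfunction of \eqref{eq:eigenvalue}, hence verifying that it lies in the weighted Sobolev space $W^{1,2}(0,1;\sigma,\sigma/(1-t^2))$ and respects the correct boundary conditions at both endpoints (the Hardy-type condition at $t=0$ under $\mu<1/4$, and the degenerate condition at $t=1$). The variational comparison in (ii) is analogous and requires the same admissibility check for the zero-extension.
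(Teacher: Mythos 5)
Your proof of part (i) is essentially identical to the paper's own argument: a Banach fixed point near $t=1$ for the integral equation \eqref{eq:kint}, ODE continuation, the Frobenius dichotomy at $t=0$, and exclusion of the regular ($B=0$) case because it would make $k_\gamma$ an eigenfunction of \eqref{eq:eigenvalue}, contradicting the non-resonance hypothesis. The identity $\Lambda(-(\gamma+n-2))=\Lambda(\gamma)$ closing the argument for $H_-$ is also what the paper uses implicitly. Your main-obstacle remark correctly identifies that one must check $k_\gamma\in W^{1,2}(0,1;\sigma,\sigma/(1-t^2))$; for $\mu<1/4$ this follows from $2\alpha_+>1$, which is precisely the paper's Hardy-type constraint.

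For part (ii) you depart from the paper: the paper disposes of positivity by one sentence invoking Sturm's comparison theorem, whereas you give a self-contained variational argument, truncating $k_\gamma$ at its first zero $t^*$, extending by zero, and comparing the resulting Rayleigh quotient $\Lambda(\gamma)$ against the principal eigenvalue $\Lambda_{1,0}=\Lambda(\alpha_+)$. This is correct (the zero-extension is weakly differentiable because $k_\gamma(t^*)=0$, so no distributional defect appears, and the admissibility at $t=1$ is inherited from $k_\gamma$), and has the advantage of staying within the same variational framework used to define the eigenvalues $\Lambda_{s,m}$; Sturm comparison would require first casting \eqref{eq:k'} in canonical Sturm--Liouville form and comparing against the regular eigenfunction $\varphi_{1,0}=t^{\alpha_+}$. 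Both routes work, and the cost of the variational route is roughly the admissibility check you already flagged. Part (iii) is a direct computation, which the paper omits; your verification of the indicial cancellation and of $\Lambda(\alpha_-)<\Lambda_{1,0}$ (so that (iii) is indeed a special case of (i)--(ii)) is correct.

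One small cosmetic slip: in (iii) you write "the $t^{\alpha_-}$ coefficient reduces to $-\alpha_-(\alpha_-+n-2)+\Lambda(\alpha_-)=0$", having already absorbed the $-(n-1)\alpha_-$ term and the $-t^2\ddot k$ contribution into the grouping; this is fine but worth spelling out, since at a glance it looks as though the $-(n-1)t\dot k$ term was dropped.
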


\begin{remark}
Any linear combination of $\L_\mu$-harmonics is again an $\L_\mu$-harmonic, but it is not necessarily a separable $\L_\mu$-harmonic.
\end{remark}

\begin{remark} It follows also from Sturm's comparison theorem that every $\L_\mu$-harmonic of the form \eqref{sharmonics} is positive if $\Lambda(\gamma)<\Lambda(\alpha_+)$.
The admissible range of $\gamma$ for positive $\L_\mu$-harmonics in \eqref{eq:g} fills therefore the whole interval $\gamma\in(-(\alpha_++n-2),\alpha_+)$. Observe that $\alpha_-=-\alpha_++1$ belongs to this range. This is in accordance with the statement in Proposition \ref{phs} (iii).
\end{remark}

Clearly, Theorem \ref{t-phs} is a special case of Propositions \ref{phs}.
In the case of the pure Laplacian, i.e. $\mu=0$, we will illustrate positive harmonics constructed in Propositions \ref{thm:h1} and \ref{phs} by two examples.

\begin{example}\label{ex-1}
Consider the special case $n=2$ and $\mu=0$. Then $\alpha_+=1$, $\alpha_-=0$, $\Lambda(\alpha_+)=1$ and $\Lambda(\alpha_-)=0$.
The separable $\L_\mu$-harmonics  constructed in this section are $H_+=r^\gamma \cos(\gamma\phi)$ and $H_-=r^{-\gamma} \cos(\gamma \phi)$,
where $-\frac{\pi}{2}<\phi<\frac{\pi}{2}$. They belong to $\mathcal{H}_{0,0}$ if  $\gamma=1+2s$ for $s\in\N_0$ and otherwise to $\mathcal{H}_{0,sing}$.
If $-1\leq \gamma \leq 1$ they are positive in $\mathbb{R}^2_+$. The class $\mathcal{H}_{0,sing}$ consists of the functions $r^\gamma\cos(\gamma \phi)$ with $-1<\gamma<1$.
They can be conceived as the restrictions of positive harmonics in sectors with the opening angle $\phi\in (-\frac{\pi}{2\gamma},\frac{\pi}{2\gamma})$, vanishing on the boundary.
\end{example}

\begin{example}\label{ex-2}
Now consider the case $\mu=0$ and $n>2$. Then $\alpha_+=1$, $\alpha_-=0$, $\Lambda(\alpha_+)=n-1$ and $\Lambda(\alpha_-)=0$.
The admissible $\gamma$-range for separable positive harmonics $H_\gamma$  constructed above with $\mu=0$
is $\gamma\in[-(n-1),1]$. Unlike on the plane, not all the positive harmonics in $\mathcal{H}_{0,sing}$ are restrictions of the positive harmonics
in cones containing the half space. This difference is discussed next.

First consider the Laplacian $-\Delta$ on the conical domain $\mathcal C_\vartheta=\{x\in\R^n\setminus\{0\}\,;\,\theta_1<\vartheta\}$,
where $\vartheta\in[\pi/2,\pi)$.
Let $\lambda_1(\mathcal C_\vartheta)>0$ be the positive principal Dirichlet eigenvalue of $-\Delta_{\mathcal S^{n-1}}$ on the cross--section $\mathcal S^{n-1}\cap \mathcal C_\vartheta$
and $\phi_1\in H^1_0(\mathcal S^{n-1}\cap \mathcal C_\vartheta)$ be a corresponding positive principal Dirichlet eigenfunction.
It is not difficult to see that $\lambda_1(\mathcal C_\vartheta)$ is a continuous monotone decreasing function.
Since $\lambda_1(\mathcal C_{\pi/2})=n-1$ and $\lim_{\vartheta\to\pi}\lambda_1(\mathcal C_\vartheta)=0$,
a direct computation shows that for every $\gamma\in(0,1)$ there exits $\vartheta\in(\pi/2,\pi)$ such that
$$H_+=r^{\gamma}\phi_1,\qquad H_-=r^{-(\gamma+n-2)}\phi_1,$$
are harmonic on the cone $\mathcal C_\vartheta$ (see e.g. \cite{DPP} for similar constructions on conical domains).
The restriction of $H_\pm$ on $\R^n_+\subset\mathcal C_\vartheta$ are harmonics in the class $\mathcal H_{0,sing}$ of the type constructed in \eqref{eq:g} with $\gamma\in(0,1)\cup\big(-(n-1),-(n-2)\big)$.

To cover the remaining range $\gamma\in[-(n-2),0]$, consider in $\R^n\setminus\{0\}$ the operator
\begin{equation*}
-L_\vartheta:=-\Delta - \frac{\vartheta a(\theta_1)}{|x|^2},
\end{equation*}
where $\vartheta\ge 0$ and $a:[0,\pi]\to \R$ is a nonnegative continuous function such that $a(\theta_1)=0$ for $\theta_1\in[0,\pi/2]$ and $a(\pi)=1$.
Let $\lambda_1(\vartheta a)\le 0$ denotes the principle eigenvalue of the Laplace Beltrami operator
$-\Delta_{\mathcal S^{n-1}}-\vartheta a$ on $\mathcal S^{n-1}$ and $\phi_1\in H^1(\mathcal S^{n-1})$ be the corresponding positive eigenfunction.
Again, it is not difficult to see that $\lambda_1(\vartheta a)$ is a continuous monotone decreasing function,
$\lambda_1(0)=0$ and $\lim_{\vartheta\to\infty}\lambda_1(\vartheta a)=-\infty$,
In particular, there exists a critical value $\vartheta^*>0$ such that $\lambda_1(\vartheta^*a)=-\frac{(n-2)^2}{4}$.
A direct computation then shows that for every $\gamma\in[-\frac{n-2}{2},0]$ there exits $\vartheta\in[0,\vartheta^*]$ such that
$$H_+=r^{\gamma}\phi_1,\qquad H_-=r^{-(\gamma+n-2)}\phi_1,$$
are $L_\vartheta$-harmonics in $\R^n\setminus\{0\}$.
Since $a(\theta_1)=0$ for all $\theta_1\in[0,\pi/2]$,
the restriction of $H_\pm$ on $\R^n_+$ are $\Delta$-harmonics in the class $\mathcal H_{0,sing}$ of the type constructed in \eqref{eq:q} with $\gamma\in[-(n-2),0]$.
\end{example}

%%%%%%%%%%%%%%%%%%%%%%SSSSSSSSSSSSSSSSSSSSSSSSSSS
%%%%%%%%%%%%%%%ssssssssssssssssssSSSSSSSSS
%%%%%%%%%   %%%%SSSSSSSSSSSSSSSSSSSSSSSSS

\section{Separable solutions of the nonlinear problem}\label{s-sep-nonlin}
Our goal is to look for separable solutions of the nonlinear equation \eqref{P} of the form
$$u(r,\theta)= r^{-\frac{2}{p-1}}v(t).$$
Observe that in contrast to the separable $\L_\mu$-harmonics, $u$ is independent of $\eta$. The equation for $v(t)$ is
\begin{align}\label{eq:v}
\mathcal{L}v:=(1-t^2)\ddot{v} -(n-1) t\dot{v} +\frac{\mu}{t^2}v + \Lambda\left(-\tfrac{2}{p-1}\right)v= v^p
\qquad\text{for $t\in(0,1)$,}
\end{align}
where $\Lambda(\cdot)$ is defined in \eqref{e-Lambda}.
The existence of a solution is based on the method of upper and lower solutions. The
following identity will play a crucial role. Let $\alpha=\alpha_\pm$, then
\begin{align}\label{identity}
\mathcal{L}t^{\alpha +\epsilon}
= \epsilon(2\alpha +\epsilon-1)t^{\alpha + \epsilon-2} +\left[\Lambda\left(-\tfrac{2}{p-1}\right)-\Lambda(\alpha+\epsilon)\right]t^{\alpha +\epsilon}.
\end{align}

\subsection {Construction of a solution $v$ that behaves like $t^{\alpha_+}$ near $t=0$}

\noindent  Assume
\begin{align*}
1<p<p_c,
\end{align*}
where $p_c$ is defined by \eqref{pc}.
Then $\Lambda\left(-\tfrac{2}{p-1}\right)> \Lambda(\alpha_+)$. By \eqref{identity} the function
$\underline v=\tau t^{\alpha_+}$ satisfies
$$
\mathcal{L}\underline{v}-\underline{v}^p= \tau t^{\alpha_+}\left\{\Lambda\left(-\tfrac{2}{p-1}\right)-\Lambda(\alpha_+)-\tau^{p-1}t^{\alpha_+(p-1)}\right\}.
$$
Consequently for small $\tau>0$ the expression above is positive and $\underline{v}$ is a therefore a lower solution of \eqref{eq:v}.
\medskip

In a second step we shall construct an upper solution of \eqref{eq:v}. For this purpose consider the function
$$
\overline{v}=ct^\alpha(1-\kappa t^\epsilon), \tx{where} \alpha=\alpha_+,\: \kappa<1 \tx{and} \epsilon >0.
$$
A straightforward computation yields
\begin{equation*}
\mathcal{L} \overline{v}=c\Lambda_0t^\alpha-c\kappa\epsilon(2\alpha +\epsilon -1)t^{\alpha +\epsilon -2} -c\kappa \Lambda_\epsilon t^{\alpha +\epsilon},
\end{equation*}
where
$$\Lambda_\epsilon:=\Lambda(-\tfrac{2}{p-1})-\Lambda(\alpha +\epsilon).$$
Thus
$$
\mathcal{L}\overline{v}-\overline{v}^p=ct^{\alpha +\epsilon-2}[\Lambda_0t^{2-\epsilon}-\kappa \epsilon(2\alpha+\epsilon-1)-\kappa\Lambda_\epsilon t^2-c^{p-1}t^{\alpha(p-1)+2-\epsilon}(1-\kappa t^\epsilon)^p].
$$
If we choose $\epsilon <2$, $\kappa<1$ and keep in mind that  $2\alpha_+>1$ and that $\Lambda_0,\Lambda_\epsilon >0$, the expression above is negative for large
$c$. Hence $\overline{v}$ is an upper solution. We can always take $\tau$ sufficiently small and $c$ sufficiently large such that $\underline{v}<\overline{v}$. By the method of upper and lower solutions there exists a solution  $\underline{v}\leq v\leq \overline{v}$.
%%%%%%%%%%%%%%%%%%%%LLLLLLLLLLL
\begin{lemma}\label{existence1}
(i) If $1<p<p_c$ then \eqref{eq:v} has a  solution $v$ such that for some $0<c_1<c_2$,
$$c_1 t^{\alpha_+}<v(t)<c_2t^{\alpha_+}\tx{for all $\,t\in [0,1]$.}$$
(ii) If $p\ge p_c$ then there is no such solution. In particular there is no solution of \eqref{eq:v} such that $\int_0^1\sigma \dot{v}^2dt$ and $\int_0^1\frac{v^2}{t^2}\:dt$ exist.
\end{lemma}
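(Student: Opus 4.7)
The plan for part (i) is to invoke the classical sub/supersolution method, using the explicit functions $\underline v=\tau t^{\alpha_+}$ and $\overline v=ct^{\alpha_+}(1-\kappa t^\epsilon)$ already constructed in the paragraphs preceding the statement. After fixing $\epsilon\in(0,2)$, $\kappa<1$, taking $c$ large and $\tau$ small so that $\underline v\leq\overline v$ on $[0,1]$, I would run a monotone iteration on compact subintervals $[\eta,1-\eta]\subset(0,1)$, prescribing boundary values from the sub- and supersolution, and then pass to the limit $\eta\to 0$ via a diagonal argument and interior Schauder estimates. The pointwise sandwich $\underline v\leq v\leq\overline v$ is preserved in the limit and immediately yields the bounds $c_1 t^{\alpha_+}\leq v\leq c_2 t^{\alpha_+}$. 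Regularity at $t=1$ follows from the Fuchsian analysis of \eqref{eq:k} done in Section \ref{s-sep-lin}, and the expansion $v\sim c\,t^{\alpha_+}$ at $t=0$ follows from the indicial equation \eqref{indicial0} combined with the bounds.

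For part (ii), the plan is a variational argument. First, put \eqref{eq:v} into the self-adjoint form
\begin{equation*}
\frac{d}{dt}(\sigma\dot v)+\frac{\sigma}{1-t^2}\Bigl(\frac{\mu}{t^2}+\Lambda\Bigr)v=\frac{\sigma}{1-t^2}v^p,
\end{equation*}
with $\sigma(t)=(1-t^2)^{(n-1)/2}$ and $\Lambda:=\Lambda(-\tfrac{2}{p-1})$, by multiplying through by $(1-t^2)^{(n-3)/2}$. Test against $v$ and integrate over $(0,1)$. The boundary term $[\sigma\dot v\,v]_0^1$ vanishes at $t=1$ because admissible separable solutions are regular on the $x_1$-axis (so $\sigma(1)\dot v(1)=0$), and at $t=0$ because under the assumed integrability $\int_0^1\sigma\dot v^2\,dt<\infty$ together with $\int_0^1 v^2/t^2\,dt<\infty$ forces $\sigma\dot v\,v\to 0$ (using Cauchy--Schwarz on $[0,\eta]$ and the integrability of the Hardy term). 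The outcome is the energy identity
\begin{equation*}
\int_0^1\sigma\dot v^2\,dt-\mu\int_0^1\frac{\sigma v^2}{t^2(1-t^2)}dt-\Lambda\int_0^1\frac{\sigma v^2}{1-t^2}dt=-\int_0^1\frac{\sigma v^{p+1}}{1-t^2}dt<0.
\end{equation*}

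The decisive step is to combine this identity with the variational characterisation of the first eigenvalue $\Lambda_{1,0}=\Lambda(\alpha_+)$ of \eqref{eq:eigenvalue} with $\nu_m=0$, established in Section \ref{s-sep-lin}:
\begin{equation*}
\int_0^1\sigma\dot v^2\,dt-\mu\int_0^1\frac{\sigma v^2}{t^2(1-t^2)}dt\ \geq\ \Lambda_{1,0}\int_0^1\frac{\sigma v^2}{1-t^2}dt.
\end{equation*}
Subtracting $\Lambda\int\sigma v^2/(1-t^2)\,dt$ from both sides and using the energy identity gives $(\Lambda_{1,0}-\Lambda)\int\sigma v^2/(1-t^2)\,dt\le -\int\sigma v^{p+1}/(1-t^2)\,dt<0$, which forces $\Lambda(-\tfrac{2}{p-1})>\Lambda(\alpha_+)$. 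Since $\gamma\mapsto\Lambda(\gamma)=\gamma(\gamma+n-2)$ is a convex parabola attaining the value $\Lambda(\alpha_+)=\Lambda_{1,0}$ precisely at $\gamma=\alpha_+$ and $\gamma=-(n-2+\alpha_+)$, this inequality is equivalent to $-\tfrac{2}{p-1}<-(n-2+\alpha_+)$, i.e.\ $p<p_c$. Consequently, for $p\geq p_c$ no solution satisfying the two integrability conditions can exist; a fortiori, no solution obeying the pointwise bound $c_1t^{\alpha_+}\leq v\leq c_2t^{\alpha_+}$ exists, since under our standing hypothesis $\mu<1/4$ one has $2\alpha_+-1>0$, which makes such $v$ automatically lie in the relevant energy class. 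The main obstacle will be the careful justification that the boundary terms at $t=0$ and $t=1$ really vanish under only the stated integrability, but the weighted Cauchy--Schwarz estimates together with the Fuchsian structure at the endpoints should handle this cleanly.
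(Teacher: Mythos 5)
Your proof of part (i) matches the paper's argument: both invoke the sub/supersolution pair $\underline v=\tau t^{\alpha_+}$ and $\overline v=ct^{\alpha_+}(1-\kappa t^\epsilon)$ constructed just before the lemma and pass to an intermediate solution by the monotone iteration method; the extra details you supply about exhausting $(0,1)$ by compact subintervals are routine and consistent with what the paper leaves implicit.

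For part (ii) your argument is correct, but it is a genuinely different route from the one in the paper. You test the self-adjoint form of \eqref{eq:v} against $v$ itself, obtaining the energy identity
\[
\int_0^1\sigma\dot v^2\,dt-\mu\int_0^1\frac{\sigma v^2}{t^2(1-t^2)}\,dt-\Lambda\!\left(-\tfrac{2}{p-1}\right)\int_0^1\frac{\sigma v^2}{1-t^2}\,dt=-\int_0^1\frac{\sigma v^{p+1}}{1-t^2}\,dt<0,
\]
and then compare against the Rayleigh-quotient characterisation of $\Lambda_{1,0}=\Lambda(\alpha_+)$ from \eqref{eq:eigenvalue}, forcing $\Lambda(-\tfrac{2}{p-1})>\Lambda(\alpha_+)$, i.e.\ $p<p_c$. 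The paper instead tests directly against the principal eigenfunction $t^{\alpha_+}$: integrating by parts twice and using that $t^{\alpha_+}$ solves the linear equation \eqref{eq:k'}, the Hardy terms cancel, and one lands on the identity \eqref{eq-nonex}, whose integrand has a sign as soon as $\Lambda(-\tfrac{2}{p-1})-\Lambda(\alpha_+)\le 0$, giving $v\equiv 0$ immediately. Both strategies rest on the same ingredient — the fact that $t^{\alpha_+}$ is the first eigenfunction of \eqref{eq:eigenvalue} with $\nu_m=0$ and eigenvalue $\Lambda(\alpha_+)$ — but the paper's linear test is a one-line sign argument that only uses the eigenfunction equation, whereas yours is quadratic and additionally requires the variational minimality of $\Lambda_{1,0}$. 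The paper's version is slightly cheaper; yours has the minor virtue of producing the clean strict inequality $\Lambda(-\tfrac{2}{p-1})>\Lambda(\alpha_+)$. In both cases one must check that the boundary contributions vanish under the stated finiteness of $\int_0^1\sigma\dot v^2\,dt$ and $\int_0^1 v^2/t^2\,dt$; you flag this explicitly (the paper leaves it implicit), and your Cauchy–Schwarz argument at $t=0$ together with $\sigma(1)=0$ handles it. Finally, the deduction that a solution in the pointwise sandwich $c_1t^{\alpha_+}\le v\le c_2t^{\alpha_+}$ is automatically in the energy class deserves a word: the pointwise bound alone does not control $\dot v$, so you also need the Fuchsian structure of the ODE (which forces $\dot v\sim t^{\alpha_+-1}$ near $t=0$), and then $2\alpha_+-1>0$ gives integrability. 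You gesture at this via "the Fuchsian analysis"; it would be cleaner to state it explicitly, but the logic is sound.
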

%%%%%%%%%%%%%%%%%%%%lllllllllll
\proof
The existence has been established above by means of the method of upper and lower solutions. In order to prove (ii) we assume that there exists a positive solution $v$.
Testing \eqref{eq:v} with $t^{\alpha_+}$ we obtain
\begin{equation}\label{eq-nonex}
\int_0^1\frac{\sigma}{1-t^2}\left(vt^{\alpha_+}[\Lambda(-\tfrac{2}{p-1})-\Lambda(\alpha_+)] -v^pt^{\alpha_+}\right)\:dt=0.
\end{equation}
By our assumption on $p$ the bracket $[\Lambda(-\tfrac{2}{p-1})-\Lambda(\alpha_+)] $ is non positive. This contradicts the identity \eqref{eq-nonex} unless $v\equiv 0$. \hfill $\square$
\medskip

%%%%%%%%%%%%%%%%%%%%%%%%%%%%%%%%%%%%ssssssssssssss
%%%%%%%%%%%%%%%%%%%%%%%%sssssssssssssssss
\subsection{Construction of a solution $v$ that behaves like $t^{\alpha_-}$ near $t=0$}
 The construction of an upper  solution for small solutions in Lemma \ref{existence1} relies heavily on the fact that $\alpha_+>1/2$. Since $\alpha_-<1/2$, we need
a different argument. Throughout this section we shall make the additional assumption
\begin{align}\label{as:3}
-\tfrac{2}{p-1}<\alpha_-,
\end{align}
which is equivalent with $1<p<p_{KO}$ defined in \eqref{de:p}.
We shall distinguish between two cases.

(i) $\Lambda\left(-\tfrac{2}{p-1}\right)-\Lambda(\alpha_-)\leq 0.$

From \eqref{identity} it follows that the function $\overline{v}:=ct^{\alpha_-}$ satisfies
\begin{align}\label{eq:q}
\mathcal{L}\overline{v}-\overline{v}^p= c t^{\alpha_-}\left\{\Lambda\left(-\tfrac{2}{p-1}\right)-\Lambda(\alpha_-)-c^{p-1}t^{\alpha_-(p-1)}\right\}.
\end{align}
By our assumption the right-hand side is non positive for $t\in [0,1]$. The function $\overline{v}=ct^{\alpha_-}$ is therefore an upper solution.

For a lower solution we make the ansatz $\underline{v}=\tau t^{\alpha_-}(1- \kappa t^\epsilon)_+$, $\kappa\geq 1$. Then
$$
\mathcal{L}\underline{v}-\underline{v}^p=\tau t^{\alpha_- +\epsilon-2}[\Lambda_0^-t^{2-\epsilon}- \kappa \epsilon(2\alpha_-+\epsilon-1)-\kappa \Lambda_\epsilon^- t^2-\tau^{p-1}t^{\alpha_-(p-1)+2-\epsilon}(1- \kappa t^\epsilon)^p] \tx{in} \Big(0,\frac{1}{\kappa^{\frac{1}{\epsilon}}}\Big).
$$
Here
\begin{align*}\Lambda_\epsilon^-=&\Lambda(-\frac{2}{p-1})-\Lambda(\alpha_- +\epsilon)= \left( -\frac{2}{p-1} +\alpha_-+\epsilon +n-2\right) \left(-\frac{2}{p-1}-\alpha_--\epsilon\right)\\
=& \Lambda_0^- -\epsilon(2\alpha_-+n-2)-\epsilon^2.
\end{align*}
By our assumptions,
$\Lambda_0^-\leq 0$ and \eqref{as:3}, we have $-\frac{2}{p-1} +\alpha_- +n-2\geq 0$.  Thus $\Lambda^-_\epsilon <\Lambda_0^-$ for any positive $\epsilon$. Let $\epsilon >0$ be such that $2\alpha_-+\epsilon -1<0$ and $\alpha_-(p-1) +2-\epsilon >0$
(the latter strict inequality is satisfied in view of \eqref{as:3}). It is now possible to choose $\kappa>1$ such that
\begin{align*}
\Lambda_0^-t^{2-\epsilon}- \kappa \epsilon(2\alpha_-+\epsilon-1)-\kappa \Lambda_\epsilon^- t^2>0 \tx{in} \Big[0,\frac{1}{\kappa^{\frac{1}{\epsilon}}}\Big].
\end{align*}
Then $\tau$ can be taken so small that $\mathcal{L}\underline{v}-\underline{v}^p\geq 0$ in $(0,\kappa^{-1/\epsilon}]$. Moreover we have $\mathcal{L}\underline{v}-\underline{v}^p= 0$
in $(\kappa^{-1/\epsilon},1)$. Hence $\underline{v}$ is a lower solution in the weak sense.  We can now take $c$ in the definition of the upper solution so large that
$\underline{v}<\overline{v}$. By standard arguments there exists a solution $\underline{v}\leq v\leq ct^{\alpha_-}$.
\medskip

(ii) $\Lambda\left(-\tfrac{2}{p-1}\right)-\Lambda(\alpha_-)> 0.$

Now there holds $\Lambda_0^->0$. In this case we have to modify the upper solution.
We make the ansatz $\overline{v}= c t^{\alpha_-}(1+ \kappa t^\epsilon)$, $\kappa>1$. Then
$$
\mathcal{L}\overline{v}-\overline{v}^p=c t^{\alpha_- +\epsilon-2}[\Lambda^-_0t^{2-\epsilon}+ \kappa \epsilon(2\alpha_-+\epsilon-1)+\kappa \Lambda^-_\epsilon t^2-c^{p-1}t^{\alpha_-(p-1)+2-\epsilon}(1+\kappa t^\epsilon)^p] \tx{in} (0,1).
$$
We choose again $\epsilon$ such that $2\alpha_-+\epsilon -1<0$  and $\alpha_-(p-1) +2 -\epsilon >0$. For large $c$ and $\kappa$ the expression in the brackets is negative and $\overline{v}$ is an upper solution. For the lower solution we set $\underline{v}=\tau t^{\alpha_-}(1-\kappa t^\epsilon)_+$, $\kappa>1$ and choose $\epsilon$ as for the upper solution. As in (i) we deduce that for small $\tau$, $\underline{v}$ is a weak lower solution which is bounded from above by $\overline{v}$. By the same arguments as before there exists a solution $\underline{v}\leq v\leq\overline{v}$.
%%%%%%%%%%%%%%%%%%%%%%llllllllll
\subsubsection{Review of the upper and lower solutions of \eqref{eq:v}}
We summarize the different upper and lower solutions constructed in the previous sections, in dependence on the parameters $\mu$ and $p$.

In case (i) of Section 3.2, the assumptions $\Lambda_0^-\leq 0$ and \eqref{as:3} together imply firstly that $2\alpha_- +n-2>0$ or equivalently
\begin{equation*}
\mu^*:=-\frac{n(n-2)}{4}< \mu.
\end{equation*}
and secondly
\begin{equation*}
p_{c}^-:=
1+\frac{2}{n-2+\alpha_-}\leq p.
\end{equation*}
Therefore, for $\mu\le\mu^*$ it holds $p_{c}^-\ge p_{KO}$ and the case (i) is empty.

In case (ii) of Section 3.2, the assumptions $\Lambda_0^-> 0$ and \eqref{as:3}  imply
$$
-\frac{2}{p-1} +\alpha_-+n-2 <0.
$$
Hence either
$$
\mu^*<\mu \tx{and} 1<p<p_c^-,
$$
or
$$
\mu^*\geq \mu \tx{and} 1<p<p_{KO}.
$$
If $\mu=\mu^*$, then $p^-_c =\infty$. The upper and lower solutions constructed in this section are illustrated in Table \ref{table1}.

\begin{table}[h!]
\begin{center}
\begin{tabular}{c c c c }
\toprule
$\mu$
& $p$ & subsolution & supersolution \\
\midrule
$-\infty<\mu<1/4$ &$1<p< p_c$& $\tau t^{\alpha_+}$ & $ct^{\alpha_+}(1-\kappa t^\epsilon)$, $\;\kappa<1$ \\
\addlinespace
$\mu^*<\mu<1/4$ &$p_c^-
\le p<p_{KO}$ &$\tau t^{\alpha_-}(1-\kappa t^\epsilon)_+$, $\;\kappa>1$ & $ct^{\alpha_-}$\\
\addlinespace
$\mu^*<\mu<1/4$ & $1<p<p_c^-$ &$\tau t^{\alpha_-}(1-\kappa t^\epsilon)_+$,\: $\kappa>1$& $ct^{\alpha_-}(1+ \kappa t^\epsilon)$, $\;\kappa\gg 1$\\
\addlinespace
$-\infty<\mu\leq\mu^*$ &$1<p<p_{KO}$&$\tau t^{\alpha_-}(1-\kappa t^\epsilon)_+,\: \kappa >1$& $ct^{\alpha_-}(1+ \kappa t^\epsilon)$, $\;\kappa\gg 1$\\
\bottomrule
\end{tabular}
\end{center}
\caption{Sub and supersolutions for \eqref{P} ($\tau$ is small, $c$ is large)}\label{table1}
\end{table}

In conclusion we have the following.
%%%%%%%%%%%%%%%%%LLLLLLLLLLLLLLLLLL
\begin{lemma}\label{existence2}
Assume $1<p<p_{KO}$. Then there exist constants $0<c_1<c_2$ such that  \eqref{eq:v} has a solution $v$ satisfying
$$c_1t^{\alpha_-}<v(t)<c_2t^{\alpha_-} \tx{for all} t\in[0,1].$$
\end{lemma}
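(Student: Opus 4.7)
The plan is to assemble the sub/supersolution pairs already produced in cases (i) and (ii) of Section~3.2 into a single application of the monotone iteration method for the weighted Fuchsian ODE \eqref{eq:v}. According to the review immediately preceding the statement, the hypothesis $1<p<p_{KO}$ falls into one of the four rows of Table~\ref{table1} involving the $\alpha_-$ family; in every row we have in hand a weak subsolution $\underline v(t)=\tau t^{\alpha_-}(1-\kappa t^\epsilon)_+$ (with $\kappa>1$ and $\epsilon$ chosen so that $2\alpha_-+\epsilon-1<0$ and $\alpha_-(p-1)+2-\epsilon>0$) and a classical supersolution $\overline v(t)$ equal to either $c\,t^{\alpha_-}$ (when $\Lambda_0^-\le 0$) or $c\,t^{\alpha_-}(1+\kappa t^\epsilon)$ (when $\Lambda_0^->0$). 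First I would record that by taking $\tau$ small and $c$ large both quantities are comparable to $t^{\alpha_-}$ and satisfy $\underline v\le\overline v$ on $[0,1]$.

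Next I would run the usual monotone scheme. Rewrite \eqref{eq:v} as $(-\mathcal L+K)v=Kv-v^p$ for $K$ larger than $p\,\overline v(t)^{p-1}$ uniformly, so that the right-hand side is monotone increasing in $v$ on $[0,\overline v]$ and the operator $-\mathcal L+K$ is coercive on the weighted Sobolev space $W^{1,2}(0,1;\sigma,\sigma/(1-t^2))$ attached to the weak formulation \eqref{eq:eigenvalue}. Iterating from $v_0=\underline v$ produces an increasing sequence of weak solutions dominated by $\overline v$; the limit $v$ is a weak solution of \eqref{eq:v}, and interior elliptic regularity promotes it to a classical solution on $(0,1)$. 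The fact that both bounds $\underline v$ and $\overline v$ behave like $t^{\alpha_-}$ at $t=0$ and are finite at $t=1$ transfers to $v$.

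Finally I would read off the pointwise bounds. The upper bound $v(t)\le c_2 t^{\alpha_-}$ on $[0,1]$ is immediate from $v\le\overline v$. For the lower bound, on a small interval $[0,t_0]$ the inequality $v\ge\underline v=\tau t^{\alpha_-}(1-\kappa t^\epsilon)$ yields $v(t)\ge(\tau/2)t^{\alpha_-}$; on $[t_0,1]$, the strong maximum principle applied to the linear operator $-\mathcal L$ (with the nonlinearity absorbed into a nonpositive zero-order term) gives $v>0$ together with a positive lower bound by continuity at $t=1$, and since $t^{\alpha_-}$ is uniformly comparable to constants on $[t_0,1]$, this yields $v(t)\ge c_1 t^{\alpha_-}$ there. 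Combining the two intervals gives the stated two-sided bound.

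The main obstacle I anticipate is not the sub/supersolution construction, which Section~3.2 already completes case by case, but verifying that the monotone iteration can be carried out in the correct weighted functional setting so that the limit inherits the prescribed boundary behavior at both endpoints $t=0$ (the genuine Hardy singularity) and $t=1$ (the apparent singularity on the $x_1$-axis where regularity of $u=r^{-2/(p-1)}v(t)$ must be enforced). Once the coercivity of $-\mathcal L+K$ on the correct weighted space is recorded from the eigenvalue framework used for \eqref{eq:eigenvalue}, the rest of the argument is routine.
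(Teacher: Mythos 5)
Your proposal follows essentially the same route as the paper: Lemma~\ref{existence2} is a direct summary of the sub/supersolution constructions of Section~3.2 (recorded in Table~\ref{table1}), finished off by the method of sub and supersolutions, which is precisely what the paper calls ``standard arguments.'' Your expansion of those arguments via monotone iteration, and your separate treatment of the strict lower bound at $t=1$ (where $\underline v\equiv 0$), is correct and if anything more explicit than the text.

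One small point to tighten: for $\mu<0$ one has $\alpha_-<0$, so $\overline v^{\,p-1}\sim t^{\alpha_-(p-1)}$ is unbounded as $t\to 0$, and no constant $K$ dominates $p\,\overline v^{\,p-1}$ uniformly. You would have to take $K=K(t)$ of that order; since \eqref{as:3} gives $\alpha_-(p-1)>-2$, this extra potential is strictly sub-Hardy, so the weighted coercivity of $-\mathcal L+K$ on $W^{1,2}(0,1;\sigma,\sigma/(1-t^2))$ that you invoke from the framework around \eqref{eq:eigenvalue} still holds, and the iteration goes through unchanged after this adjustment.
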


\begin{remark}
In the case $\mu=0$ we have $\alpha_-=0$. Then for $1<p<\frac{n}{n-2}=p_c^-$,
$$v(t)=\Lambda\left(-\tfrac{2}{p-1}\right)^\frac{1}{p-1}$$
is a solution as in Lemma \ref{existence2}, see \eqref{eq:q}.
\end{remark}

%%%%%%%%%%%%%%%%%%%%%%%%%%%%%%%%%%%ssssssssssss
%%%%%%%%%%%%%%%ssssssss
\subsection {Uniqueness} This section is devoted to the proof of the following
%%%%%%%%%%%%%%%%%%%%%%%%%%%LLLLLLLLLLLLLLLLLLLL

\begin{proposition}\label{le:uniqueness}
Assume $\Lambda_0=\Lambda(-\frac{2}{p-1})-\Lambda(\alpha_+)>0$.
Then \eqref{eq:v} has a unique positive solution $v$  that satisfies $v\leq c_1t^{\alpha_+}$
for $t\in[0,1]$ and  for some $c_1>0$.
\end{proposition}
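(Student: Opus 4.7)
The plan is to establish uniqueness by a weighted Picone-type identity, in the spirit of classical uniqueness arguments for superlinear Dirichlet problems.

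\textbf{Endpoint asymptotics.} Let $v_1, v_2$ be two positive solutions satisfying $v_i \le c_1 t^{\alpha_+}$ on $[0,1]$. First I would pin down the precise boundary behaviour. Near $t = 0$, equation \eqref{eq:v} is a Fuchsian perturbation of the Euler-type equation with indicial roots $\alpha_\pm$, and the a priori bound rules out the singular $t^{\alpha_-}$-mode. Standard asymptotic integration (cf.~\cite[Ch.~4]{Hartman}) then yields
$$
v_i(t) = c_i^+\, t^{\alpha_+}(1 + o(1)), \qquad \dot v_i(t) = \alpha_+ c_i^+\, t^{\alpha_+ - 1}(1 + o(1)) \qquad (t \to 0),
$$
with $c_i^+ > 0$; otherwise the same asymptotic representation would force $v_i \equiv 0$. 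At $t = 1$, the indicial roots (with $\nu_m = 0$) are $0$ and $(3-n)/2$, so positive bounded solutions sit on the regular root: $v_i(1) > 0$ and $\dot v_i$ is bounded near $t = 1$.

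\textbf{The Picone step.} Using the divergence form $\tfrac{d}{dt}(\sigma\dot v) + \tfrac{\sigma}{1-t^2}\bigl(\tfrac{\mu}{t^2} + \Lambda(-\tfrac{2}{p-1})\bigr) v = \tfrac{\sigma}{1-t^2}\, v^p$ of \eqref{eq:v}, I would test the $v_1$-equation against $\varphi = v_2^2/v_1$ and the $v_2$-equation against $\varphi = v_2$, integrate by parts, and invoke the pointwise Picone inequality $\sigma\bigl(\dot v_2 - \tfrac{v_2}{v_1}\dot v_1\bigr)^{\!2} \ge 0$. Subtracting the two tested identities produces
$$
\int_0^1 \frac{\sigma}{1-t^2}\, v_2^2 \bigl(v_1^{p-1} - v_2^{p-1}\bigr)\, dt \;\ge\; \Bigl[\sigma v_2 \Bigl(\tfrac{v_2\dot v_1}{v_1} - \dot v_2\Bigr)\Bigr]_0^1.
$$

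\textbf{Boundary terms and symmetrisation.} At $t = 0$ the bracket is $O(t^{\alpha_+})\cdot O(t^{\alpha_+ - 1}) = O(t^{2\alpha_+ - 1}) \to 0$, using $\alpha_+ > 1/2$ (which is the constraint $\mu < 1/4$); at $t = 1$ it vanishes because $\sigma(1) = 0$ while all remaining factors are bounded thanks to $v_1(1) > 0$. Hence the right-hand side is zero. Interchanging the roles of $v_1$ and $v_2$ produces the analogous inequality, and adding the two yields
$$
-\int_0^1 \frac{\sigma}{1-t^2}\, (v_1^2 - v_2^2)\bigl(v_1^{p-1} - v_2^{p-1}\bigr)\, dt \;\ge\; 0.
$$
Since $v_1^2 - v_2^2$ and $v_1^{p-1} - v_2^{p-1}$ have the same sign pointwise, the integrand on the left is $\le 0$, forcing it to vanish and hence $v_1 \equiv v_2$ by continuity.

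\textbf{Main obstacle.} The technical heart of the argument is the Fuchsian asymptotic analysis: both the extraction of the expansion $v_i \sim c_i^+ t^{\alpha_+}$ together with its differentiated version, and the strict positivity $c_i^+ > 0$. These are what make the Picone test function $v_2^2/v_1$ meaningful near $t = 0$ and the endpoint contribution negligible. The assumption $\Lambda_0 > 0$ itself is not used directly in the uniqueness argument — it enters only through the existence part (Lemma~\ref{existence1}); the condition that actually produces the vanishing of the boundary term at $t = 0$ is $\alpha_+ > 1/2$, i.e.\ $\mu < 1/4$.
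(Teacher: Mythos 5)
Your Picone-with-symmetrisation strategy is a genuine alternative to the paper's argument, and the integral bookkeeping (subtracting the two tested identities, observing the sign of the boundary bracket, swapping $v_1,v_2$ and adding) is sound. The paper instead substitutes $v=t^{\alpha_+}w$, writes the difference of two solutions as $W-w=\phi w$, multiplies the $w$-equation by $w\phi$ and $w\phi^2$, and works by nodal-domain cases; its identity (3.11) is algebraically the symmetrised Picone identity, so the two proofs meet at the same inequality, but your symmetrisation trick elegantly removes the need for the case split over sign-changing differences.

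The real problem is in the step you yourself flag as the ``technical heart''. You assert $v_i(t)=c_i^+t^{\alpha_+}(1+o(1))$ with $c_i^+>0$ on the grounds that ``standard asymptotic integration'' applies and ``otherwise the same asymptotic representation would force $v_i\equiv 0$''. Fuchsian theory in the sense of Hartman is a statement about \emph{linear} equations with a regular singular point; equation \eqref{eq:v} is nonlinear, and the existence of a nontrivial bounded positive solution with $w(0)=\lim_{t\to 0}v(t)/t^{\alpha_+}=0$ is not excluded by a purely formal indicial analysis. The paper proves exactly this missing fact as Lemma~\ref{pr:as}: after switching to $w=v/t^{\alpha_+}$, it first shows $\tilde\sigma\dot w\to 0$ as $t\to 0$, integrates the divergence form $(\tilde\sigma\dot w)'=\tfrac{\tilde\sigma}{1-t^2}\bigl[t^{\alpha_+(p-1)}w^p-\Lambda_0 w\bigr]$ twice from $0$, and notes that if $w(0)=0$ the bracket is strictly negative for small $t$ --- \emph{because the linear term $-\Lambda_0 w$ dominates the superlinear term and $\Lambda_0>0$} --- which would force $w<0$, a contradiction. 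Without that positivity input, your test function $v_2^2/v_1$ is not controlled near $t=0$ (if $w_1(0)=0$ while $w_2(0)>0$ it blows up, the boundary bracket need not vanish, and the integrals need not converge). This also means your closing remark --- that $\Lambda_0>0$ ``is not used directly in the uniqueness argument'' and that only $\alpha_+>1/2$ matters --- is incorrect: $\alpha_+>1/2$ kills the boundary bracket once you know both limits $c_i^+$ are positive, but it is precisely $\Lambda_0>0$ that makes $c_i^+>0$ true. Supply that lemma (or an equivalent argument) and the Picone route closes cleanly.
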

%%%%%%%%%%%%%%%%%%%%%%%llllllllllllllllllll
We start with an auxiliary result.
Set $v=t^{\alpha_+}w.$ A straightforward computation implies that $w$ satisfies
\begin{align*}
(1-t^2)\ddot{w}+[\frac{2\alpha_+}{t}(1-t^2) -(n-1)t]\dot{w}+[\Lambda\left(-\tfrac{2}{p-1}\right)-\Lambda(\alpha_+)]w=t^{\alpha_+(p-1)}w^p,
\end{align*}
or equivalently
\begin{align}\label{eq:w}
(\tilde \sigma \dot{w})_t=\frac{\tilde\sigma}{1-t^2} \left[t^{\alpha_+(p-1)}w^p-\Lambda_0w\right], \tx{with} \tilde \sigma(t)= t^{2\alpha_+}(1-t^2)^{\frac{n-1}{2}}.
\end{align}
%%%%%%%%%%%%%%%%%%%%%%PPPPPPPPPPPPPPPP
\begin{lemma}\label{pr:as}
If  $\Lambda_0>0$ and $w(0)<\infty$, then $w(0)>0$.
\end{lemma}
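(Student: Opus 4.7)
The plan is to argue by contradiction: suppose $w(0)=0$. Since $v>0$, we have $w>0$ on $(0,1)$; and since $w(0)<\infty$, we know $w$ is bounded (say by $M$) and continuous at $0$ with $w(t)\to 0$ as $t\to 0^+$. The key idea is that near $t=0$ the linear term $-\Lambda_0 w$ dominates the nonlinear term $t^{\alpha_+(p-1)}w^p$ on the right-hand side of \eqref{eq:w}, forcing $\tilde\sigma\dot w$ to be strictly decreasing; then combining this with the correct boundary behavior at $0$ shows $\dot w<0$ near $0$, which contradicts $w(0)=0$ and $w>0$.

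Concretely, I would first choose $\epsilon>0$ so small that $t^{\alpha_+(p-1)}M^{p-1}\le\Lambda_0/2$ for $t\in(0,\epsilon)$. Then on $(0,\epsilon)$,
\begin{equation*}
t^{\alpha_+(p-1)}w^p-\Lambda_0 w=w\bigl(t^{\alpha_+(p-1)}w^{p-1}-\Lambda_0\bigr)\le -\tfrac{\Lambda_0}{2}w<0,
\end{equation*}
so \eqref{eq:w} yields $(\tilde\sigma\dot w)_t<0$ strictly on $(0,\epsilon)$. The next, and most delicate, step is to show that $\lim_{t\to 0^+}\tilde\sigma(t)\dot w(t)=0$. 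Since $w$ is bounded, the right-hand side of \eqref{eq:w} is uniformly bounded near $0$, so $\tilde\sigma\dot w$ is absolutely continuous and admits a finite limit $L$ at $0$. If $L>0$, then for some $\delta>0$ one has $\dot w(t)\ge(L/2)\tilde\sigma(t)^{-1}\sim (L/2)\,t^{-2\alpha_+}$ near $0$; integrating over $(t_0,t_1)$ and using that $2\alpha_+>1$ (because $\alpha_+>1/2$ whenever $\mu<1/4$) makes $\int_{t_0}^{t_1}\tilde\sigma^{-1}(s)\,ds\to+\infty$ as $t_0\to 0^+$, so $w(t_0)\to-\infty$, contradicting $w\ge 0$. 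The case $L<0$ likewise forces $w(t_0)\to+\infty$, contradicting boundedness. Hence $L=0$.

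Combining these two facts, $\tilde\sigma\dot w$ is strictly decreasing on $(0,\epsilon)$ with $\tilde\sigma(0^+)\dot w(0^+)=0$, so $\tilde\sigma\dot w<0$ on $(0,\epsilon)$, and thus $\dot w<0$ there. Consequently $w$ is strictly decreasing on $(0,\epsilon)$, which together with $w(t)\to 0$ as $t\to 0^+$ gives $w(t)<\lim_{s\to 0^+}w(s)=0$ for every $t\in(0,\epsilon)$, contradicting $w>0$. The main obstacle is the asymptotic identification $\tilde\sigma(t)\dot w(t)\to 0$: it is not purely formal since the equation is singular at $t=0$, and the argument crucially exploits the non-integrability of $\tilde\sigma^{-1}$ at the origin, which is equivalent to the Hardy range $\mu<1/4$.
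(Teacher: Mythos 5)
Your proof is correct and follows essentially the same route as the paper's: you show $\tilde\sigma\dot w\to 0$ at the origin (making explicit what the paper asserts tersely, namely that a nonzero limit would force $w$ to blow up because $\tilde\sigma^{-1}\sim t^{-2\alpha_+}$ is non-integrable when $2\alpha_+>1$), observe that the dominant linear term makes $(\tilde\sigma\dot w)'<0$ near $t=0$, and conclude $w<0$, a contradiction. The paper packages the final step as a double-integral formula for $w(t)-w(0)$ rather than your monotonicity argument, but the content is identical.
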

%%%%%%%%%%%%%%%%%%%%%%%pppppppp'p
\proof
From
$$
\tilde\sigma(t)\dot{w}(t)-\tilde\sigma(\epsilon)\dot{w}(\epsilon)= \int_\epsilon^t\frac{\tilde\sigma}{1-s^2} \left[s^{\alpha_+(p-1)}w^p-\Lambda_0w\right]\:ds
$$
we deduce that $\lim_{\epsilon\to 0}\tilde\sigma(\epsilon)\dot{w}(\epsilon)=b$. Since $w$ is bounded in $t=0$, it follows that $b=0$.
Thus
$$
w(t)-w(0)= \int_0^t\tilde\sigma^{-1}\:ds \int_0^s\frac{\tilde\sigma}{1-\xi^2} \left[\xi^{\alpha_+(p-1)}w^p-\Lambda_0w\right]\:d\xi.
$$
If $w(0)=0$ the right-hand side is negative for small $t$ and therefore also $w(t)$. This is impossible and consequently $w(0)>0$. \hfill $\square$

\begin{proof}[Proof of Proposition \ref{le:uniqueness}]
The existence has been established in Lemma \ref{existence1}. In order to prove uniqueness we follow the arguments of Lemma 3.2 in \cite{BaMoRe08} (see also Theorem 6.18  in \cite{Du06}). Suppose that there are two positive solutions $V$ and $v$ of \eqref{eq:v} that satisfy $V\leq c_1t^{\alpha_+}$
and $v\leq c_2t^{\alpha_+}$ for $t\in[0,1]$.
We first treat the case where
$V(t)>v(t)$ in $(a,b)\subset (0,1)$, $V(t)=v(t)$  at the endpoints $t=a>0$ and $t=b<1$.  Set
$V(t)=t^{\alpha_+}W(t)$ and $v(t)=t^{\alpha_+}w(t)$. Both $W$ and $w$ satisfy equation \eqref{eq:w}.  We write $W-w=:\phi w$ and observe that $\phi(a)=\phi(b)=0$ and that
$$
\frac{d}{dt}\left(\tilde\sigma( w\phi)_t\right)=\frac{\tilde\sigma}{1-t^2} \left[t^{\alpha_+(p-1)}(W^p-w^p)-\Lambda_0w\phi\right].
$$
If we multiply this equation with $w\phi$ and integrate, we obtain
\begin{align}
\label{eq:w2}
\int_a^b \frac{\tilde\sigma}{1-t^2} \left[t^{\alpha_+(p-1)}(W^p-w^p)w\phi-\Lambda_0w^2\phi^2\right]\:dt&=-\int_a^b\tilde \sigma [(w\phi)_t]^2\:dt\\
\nonumber &= -\int_a^b\tilde \sigma(w^2\dot\phi^2 +2w\phi\dot w\dot \phi +\dot w^2\phi^2)\:dt.
\end{align}
Multiplying \eqref{eq:w} by $w\phi^2$ and integrating, we get
\begin{align}\label{eq:w3}
\int_a^b \frac{\tilde\sigma}{1-t^2} \left[t^{\alpha_+(p-1)}w^{p+1}\phi^2-\Lambda_0w^2\phi^2\right]\:dt&=-\int_a^b\tilde \sigma(2w\phi\dot w \dot \phi + \dot w^2\phi^2)\:dt
\end{align}
From \eqref{eq:w2} and \eqref{eq:w3} it follows that
\begin{align}\label{eq:w4}
-\int_a^b \tilde \sigma \dot \phi^2w^2\:dt = \int_a^b \frac{\tilde \sigma}{1-t^2} t^{\alpha_+(p-1)} [(W^p-w^p)w\phi -w^{p+1}\phi^2]\:dt.
\end{align}
Since
$$
(W^p-w^p)w\phi\geq pw^{p-1}(W-w)w\phi= pw^{p+1}\phi^2,
$$
\eqref{eq:w4} implies that $W=w$, contradicting our assumption.

Suppose now that $W-w>0$ in $(0,1)$. By Lemma \ref{pr:as} there exists a positive function $\phi$ such that $W-w=w\phi$.
In the proof of Lemma \ref{pr:as} it was shown that $\lim_{t\to 0}\tilde \sigma(t)\dot w(t)=0$.  Consequently
$$
\tilde\sigma(t)\dot{w}(t)= \int_0^t\frac{\tilde\sigma}{1-s^2} \left[s^{\alpha_+(p-1)}w^p-\Lambda_0w\right]\:ds,
$$
which, by the rule of Bernoulli-L'Hospital, implies that $\dot w(0)=0$. Hence the integrals at the right-hand side of \eqref{eq:w2} exist at $t=0$.  By the same argument
they also exist at $t=1$.
Since $\tilde \sigma(\pm 1)=0$,
\eqref{eq:w2} and \eqref{eq:w4} hold if we replace $a$ by $0$ and $b$ by $1$. Exactly in the same way we treat the cases where $a=0$, $b<1$ and $a>0$ and
$b=1$.
\end{proof}

\medskip

The  investigations of this section lead to the following results for the solutions of nonlinear problem \eqref{P}.
%%%%%%%%%%%%%%%TTTTTTTTTT
\begin{theorem}\label{thm:sep}
Assume that $-\infty<\mu<1/4$.

(i) If $1<p<p_c$ then \eqref{P} has a unique separable solution of the form
\begin{equation}\label{e-separ}
u(r,\theta_1)=r^{-\frac{2}{p-1}}v(\cos(\theta_1))
\end{equation}
where $0<v<\infty$  satisfies
$$\lim_{\theta_1\to \pi/2}\frac{v(\cos(\theta_1))}{\cos(\theta_1)^{\alpha_+}}=v(0)>0.$$

(ii) If $p\ge p_c$ then \eqref{P} has no separable solution of the form \eqref {e-separ} that satisfies
$$
c_1 \cos(\theta_1)^{\alpha_+}<v(\cos(\theta_1)) <c_2\cos(\theta_1)^{\alpha_+} \tx{in} [0,\pi/2].
$$
\end{theorem}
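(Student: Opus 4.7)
My plan is to translate the theorem into a statement about the ODE \eqref{eq:v} via the separation ansatz and then invoke Lemma \ref{existence1} and Proposition \ref{le:uniqueness}. Substituting $u(r,\theta_1) = r^{-2/(p-1)} v(\cos\theta_1)$ into \eqref{P}, using the polar decomposition of the Laplacian recalled in Section \ref{sect-polar} with $t = \cos\theta_1$ and $x_1 = rt$, one checks that the resulting equation is $r$-homogeneous precisely because $\gamma = -2/(p-1)$ satisfies $\gamma p - \gamma + 2 = 0$, and the common factor $r^{\gamma-2}$ strips off to leave exactly \eqref{eq:v} on $(0,1)$. Under this correspondence a two-sided bound $c_1 t^{\alpha_+}\le v(t)\le c_2 t^{\alpha_+}$ on $[0,1]$ is equivalent to the claimed boundary envelope for $v(\cos\theta_1)$ as $\theta_1\to\pi/2$.

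For (i), I would first observe that $p < p_c = 1 + 2/(n-2+\alpha_+)$ is equivalent to $-2/(p-1) < -(n-2+\alpha_+)$; since $\alpha_+$ and $-(n-2+\alpha_+)$ are the two roots of $\Lambda(\cdot) = \Lambda(\alpha_+)$ and $\Lambda$ is a convex parabola, this is in turn equivalent to $\Lambda_0 := \Lambda(-2/(p-1)) - \Lambda(\alpha_+) > 0$. Existence of a solution $v$ with the two-sided envelope is then Lemma \ref{existence1}(i). To extract the limit $\lim_{t\to 0} v(t)/t^{\alpha_+} = v(0) > 0$, I would set $w := v/t^{\alpha_+}$, note that $w$ solves \eqref{eq:w} and is uniformly bounded above and below by positive constants, and observe that the indicial roots of \eqref{eq:w} at $t = 0$ are $0$ and $\alpha_- - \alpha_+ < 0$; hence any bounded solution corresponds to the regular Frobenius branch and extends continuously to $t = 0$, while the computation of Lemma \ref{pr:as} shows that this limit is strictly positive. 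Uniqueness among solutions satisfying $v\le \mathrm{const}\cdot t^{\alpha_+}$ on $[0,1]$ is exactly Proposition \ref{le:uniqueness} (whose hypothesis $\Lambda_0 > 0$ has just been verified); any competing separable solution is automatically in this class, since it is regular at the interior point $t=1$ and its profile at $t=0$ is controlled by the limit assumption.

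For (ii), I reduce again to \eqref{eq:v} and reproduce the argument of Lemma \ref{existence1}(ii). A direct computation gives $\mathcal L t^{\alpha_+} = \Lambda_0 t^{\alpha_+}$, so the weighted Green identity applied to the pair $(v, t^{\alpha_+})$ with density $\sigma/(1-t^2)$ converts $\mathcal L v = v^p$ into the integral identity \eqref{eq-nonex}. The only boundary contribution is $\bigl[\sigma(t^{\alpha_+}\dot v - \alpha_+ t^{\alpha_+-1} v)\bigr]_0^1$, which vanishes at $t = 1$ because $\sigma(1) = 0$, and at $t = 0$ because, after substituting $v = t^{\alpha_+} w$, the apparently singular pieces cancel and the remainder equals $t^{2\alpha_+}\dot w \to 0$. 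When $p \ge p_c$ we have $\Lambda_0 \le 0$, so the integrand of \eqref{eq-nonex} has one sign, forcing $v\equiv 0$ and contradicting the lower bound in the hypothesis.

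The principal technical subtlety, common to both parts, is upgrading the crude envelope $c_1 t^{\alpha_+}\le v\le c_2 t^{\alpha_+}$ to the finer facts needed at $t = 0$: continuity of $w = v/t^{\alpha_+}$, positivity of $w(0)$, and enough control on $\dot w$ to justify discarding the boundary term in the Green identity. All three are supplied by the Frobenius structure of \eqref{eq:w} together with the argument of Lemma \ref{pr:as}; once these routine estimates are in place, the theorem is a direct repackaging of Lemma \ref{existence1} and Proposition \ref{le:uniqueness}.
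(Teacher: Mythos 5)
Your proposal is correct and follows exactly the route the paper takes: the paper presents Theorem~\ref{thm:sep} without a separate proof, as an immediate consequence of Lemma~\ref{existence1} (existence and non-existence for the ODE \eqref{eq:v}) together with Proposition~\ref{le:uniqueness} (uniqueness) and Lemma~\ref{pr:as} (positivity of $w(0)$), preceded by the separation-of-variables reduction of Section~\ref{s-sep-nonlin}. Your write-up merely makes explicit a few points the paper leaves to the reader — the equivalence $p<p_c\iff\Lambda_0>0$, the Frobenius identification of the bounded branch of $w=v/t^{\alpha_+}$, and the cancellation $\sigma(t^{\alpha_+}\dot v-\alpha_+t^{\alpha_+-1}v)=\tilde\sigma\dot w\to 0$ justifying the vanishing of the boundary term in \eqref{eq-nonex} — all of which are consistent with the paper's Lemma~\ref{pr:as} and the computation preceding \eqref{eq:w2}.
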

%%%%%%%%%%%%%%%%%%%ttttttt

We have therefore found a solution that has a strong singularity at the origin and behaves like a small $\L_\mu$-harmonic at the rest of the boundary.
In the next theorem we describe a solution with a strong singularity at the origin that behaves like a large $\L_\mu$-harmonic at the rest of the boundary.

%%%%%%%%%%%%%TTTTTTTT
\begin{theorem}
Assume that $-\infty<\mu<1/4$. If $1<p<p_{KO}$, then there exist constants $0<c_1<c_2$ such that \eqref{P}  has a separable solution of the form
$$u(r,\theta_1)=r^{-\frac{2}{p-1}}v(\cos(\theta_1))$$
satisfying
$$
c_1 \cos(\theta_1)^{\alpha_-}<v(\cos(\theta_1)) <c_2\cos(\theta_1)^{\alpha_-} \tx{in} [0,\pi/2].
$$
\end{theorem}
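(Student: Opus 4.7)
The statement is essentially a reformulation of Lemma \ref{existence2} in the language of the nonlinear problem \eqref{P}, so my plan is to assemble the pieces already laid out in Section 3 rather than to do new analysis.

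First I would invoke Lemma \ref{existence2} directly: under the hypothesis $1<p<p_{KO}$ (equivalently $-\tfrac{2}{p-1}<\alpha_-$), that lemma produces a positive solution $v\in C^2(0,1)$ of the ODE \eqref{eq:v} together with constants $0<c_1<c_2$ such that
\begin{equation*}
c_1 t^{\alpha_-}<v(t)<c_2 t^{\alpha_-}\qquad\text{for all }t\in[0,1].
\end{equation*}
No further work is needed at the ODE level; recall that the real labor was distributing into the four regimes summarized in Table \ref{table1}, which covers all of $-\infty<\mu<1/4$ and $1<p<p_{KO}$ by pairing the subsolution $\tau t^{\alpha_-}(1-\kappa t^\epsilon)_+$ (with $\kappa>1$ and $\epsilon$ small enough that $2\alpha_-+\epsilon-1<0$ and $\alpha_-(p-1)+2-\epsilon>0$) with either $c t^{\alpha_-}$ (when $\Lambda_0^-\le 0$) or $ct^{\alpha_-}(1+\kappa t^\epsilon)$ (when $\Lambda_0^->0$).

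Next I would define
\begin{equation*}
u(r,\theta_1):=r^{-\frac{2}{p-1}}v(\cos\theta_1)
\end{equation*}
on $\R^n_+$ and verify that $u$ solves \eqref{P}. This is exactly the separation-of-variables computation recalled at the start of Section \ref{s-sep-nonlin}: writing $t=\cos\theta_1$ and using the polar form of $\Delta$ from Section \ref{sect-polar}, one checks that \eqref{eq:v} is precisely the condition for $u(r,\theta_1)$ to satisfy $-\Delta u-\tfrac{\mu}{x_1^2}u+u^p=0$. Since $u$ is independent of $\eta$, the $\Delta_{\mathcal S^{n-2}}$ term drops out, and the factor $x_1^{-2}=r^{-2}t^{-2}$ produces precisely the $\mu t^{-2}v$ contribution in \eqref{eq:v}. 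Classical regularity (mentioned in the introduction) promotes this distributional solution to a classical one.

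Finally I would translate the ODE bound into the bound of the theorem. Substituting $t=\cos\theta_1$ into the two-sided estimate for $v$ yields
\begin{equation*}
c_1\cos(\theta_1)^{\alpha_-}<v(\cos\theta_1)<c_2\cos(\theta_1)^{\alpha_-}\qquad\text{for }\theta_1\in[0,\pi/2],
\end{equation*}
which is the desired conclusion. I do not anticipate any serious obstacle: all of the delicate work—namely constructing sub/supersolutions that are comparable to $t^{\alpha_-}$ in each of the four parameter regimes, and handling the critical case $\mu\le\mu^*$—has already been absorbed into Lemma \ref{existence2}. The present theorem is, in effect, a corollary that records the geometric meaning of that ODE result in terms of positive separable solutions of \eqref{P} with the two-sided boundary behavior consistent with the large $\L_\mu$-harmonic $h_-$.
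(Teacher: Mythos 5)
Your proposal is correct and follows exactly the route the paper takes: the theorem is presented there as an immediate consequence of Lemma \ref{existence2} combined with the separation-of-variables reduction at the start of Section \ref{s-sep-nonlin}, which is precisely the assembly you carry out. Nothing more is needed.
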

%%%%%%%%%%%%%%%%%%ttttttttt
\begin{remark}
$(i)$
We expect that in this case the separable solution is not unique.

$(ii)$ From the Keller--Osserman estimate, which is presented in the next section, it follows that this result is sharp, in the sense that no such solutions exist if $p>p_{KO}$.
\end{remark}
%%%%%%%%%%%%%%%%%SSSSSSSSSSSSSSSSSSSS
%%%%%%%%%%%%%%%%%%%%%%%%%%
\section{Phragm\'en--Lindel\"of type estimate, Keller--Osserman a priori bound and nonexistence proofs}\label{s-PL}

We establish a version of the Phrag\-men--Lin\-del\"of type comparison
principle, which shows in particular that a class of $\L_\mu$-subharmonics
either have a prescribed order of singularity at the origin or have a ``regular'' decay at the origin,
similarly to the dichotomy exhibited by the $\L_mu$-harmonics in \eqref{harmonics1}.
See \cite[pp. 93-106]{Protter} for a classical reference to the Phragm\'en--Lindel\"of principle.

\begin{lemma}\label{l-PL}
{\sc (Phragm\'en--Lindel\"of type estimate)}
Let $\mu< 1/4$. Let $h\in C^2(\R^n_+\cap B_R(0))$ be an $\L_\mu$-subharmonic in $\R^n_+\cap B_R(0)$,
for some $R>0$. Assume that $x\in\R^n_+\cap B_R(0)$ and
$$\exists\rho\in(0,R):\quad
\limsup_{x_1\to 0}\frac{h(x)}{x_1^{\alpha_+}}<+\infty\;\tx{as $\rho<|x|<R$,}\leqno{(a)}$$
$$\lim_{x_1\to 0}\frac{h(x)}{x_1^{\alpha_-}+x_1^{\alpha_+}|x|^{-(n-2+2\alpha_+)}}=0.\leqno{(b)}$$
Then for $x\in\R^n_+\cap B_{R}(0)$ it holds
\begin{equation*}
\limsup_{x_1\to 0}\frac{h(x)}{x_1^{\alpha_+}}<+\infty.
\end{equation*}
\end{lemma}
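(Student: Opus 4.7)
The plan is to run a Phragm\'en--Lindel\"of comparison against an explicit $\L_\mu$-harmonic barrier built from the positive separable harmonics we identified in Propositions \ref{thm:h1}(ii) and \ref{phs}(iii). All three functions $x_1^{\alpha_+}$, $x_1^{\alpha_+}|x|^{-(n-2+2\alpha_+)}$ and $x_1^{\alpha_-}$ are positive $\L_\mu$-harmonics on $\R^n_+$, so for constants $A,B>0$ and $\eps>0$ the function
\begin{equation*}
\Phi_\eps(x):=A\,x_1^{\alpha_+}+B\,x_1^{\alpha_+}|x|^{-(n-2+2\alpha_+)}+\eps\bigl(x_1^{\alpha_-}+x_1^{\alpha_+}|x|^{-(n-2+2\alpha_+)}\bigr)
\end{equation*}
is a positive $\L_\mu$-harmonic.

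First I would fix $\rho_1\in(\rho,R)$ and choose $A,B$ large enough so that $\Phi_\eps\ge h$ on the hemisphere $\Sigma:=\{|x|=\rho_1\}\cap\R^n_+$. By (a) there exist $M,\delta_0>0$ with $h(x)\le M x_1^{\alpha_+}$ whenever $\rho<|x|<R$ and $0<x_1<\delta_0$, while $C^2$-regularity on the compact piece $\Sigma\cap\{x_1\ge\delta_0\}$ gives a uniform bound $h\le K$. Taking $A=M$ and $B=K\rho_1^{n-2+2\alpha_+}\delta_0^{-\alpha_+}$ ensures $\Phi_\eps\ge h$ on all of $\Sigma$. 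Next I would use (b) to dominate $h$ on a strip adjacent to $\partial\R^n_+$: for every $\eps>0$ there exists $\delta_\eps\in(0,\delta_0)$ such that $h(x)\le\eps\bigl(x_1^{\alpha_-}+x_1^{\alpha_+}|x|^{-(n-2+2\alpha_+)}\bigr)\le\Phi_\eps(x)$ whenever $0<x_1<\delta_\eps$ and $|x|\le\rho_1$ (uniformity is automatic here because the denominator in (b) blows up uniformly on the slice $\{x_1=\delta\}\cap\overline{B_{\rho_1}}$ as $\delta\to 0^+$).

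Then I would apply a comparison principle on the regular domain $D_\delta:=\R^n_+\cap B_{\rho_1}\cap\{x_1>\delta\}$, $0<\delta<\delta_\eps$. The difference $\psi:=h-\Phi_\eps$ satisfies $\L_\mu\psi\ge 0$ in $D_\delta$ and, by the two steps above, $\psi\le 0$ on $\partial D_\delta$. To bypass the issue that the zeroth-order coefficient $\mu/x_1^2$ may have an unfavourable sign, I would write $\psi=\Phi_\eps w$; because $\Phi_\eps>0$ and $\L_\mu\Phi_\eps=0$ in $\overline{D_\delta}$, the function $w$ satisfies
\begin{equation*}
\Delta w+2\,\nabla(\log\Phi_\eps)\cdot\nabla w=\frac{\L_\mu\psi}{\Phi_\eps}\ge 0\qquad\text{in }D_\delta,
\end{equation*}
which is a linear elliptic inequality with smooth bounded coefficients and no zeroth-order term, so the classical weak maximum principle yields $w\le 0$ in $D_\delta$ and hence $h\le\Phi_\eps$ in $D_\delta$.

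Letting first $\delta\to 0^+$ (the bottom-face bound persists for every $\delta<\delta_\eps$) and then $\eps\to 0^+$, we obtain $h(x)\le A x_1^{\alpha_+}+B x_1^{\alpha_+}|x|^{-(n-2+2\alpha_+)}$ throughout $\R^n_+\cap B_{\rho_1}$, which gives $\limsup_{x_1\to 0}h(x)/x_1^{\alpha_+}\le A+B|x|^{-(n-2+2\alpha_+)}<+\infty$ at each interior point, establishing the claim. The main obstacle is the comparison step for the singular operator $\L_\mu$; the division trick by the positive harmonic $\Phi_\eps$ is the key device, since it reduces the problem to a classical maximum principle, and it is precisely the availability of the two explicit positive $\L_\mu$-harmonics $x_1^{\alpha_+}$ and $x_1^{\alpha_+}|x|^{-(n-2+2\alpha_+)}$ (plus the large harmonic $x_1^{\alpha_-}$ as a perturbation) that makes the whole barrier construction work.
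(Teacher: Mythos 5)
Your argument is correct in outline and is essentially the same Phragm\'en--Lindel\"of barrier-comparison strategy as the paper, with two noteworthy differences and one flawed justification.

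The paper defines a single comparison function $h_\tau:=h-C_\rho x_1^{\alpha_+}-\tau\bigl(x_1^{\alpha_-}+x_1^{\alpha_+}|x|^{-(n-2+2\alpha_+)}\bigr)$, observes (using (a), (b) and continuity) that $h_\tau\le 0$ near $\partial(\R^n_+\cap B_R(0))$, invokes the comparison principle for $\L_\mu$ directly by citing \cite[Lemma~2.4]{BaMoRe08}, and then lets $\tau\to 0$, obtaining the clean bound $h\le C_\rho x_1^{\alpha_+}$ on all of $\R^n_+\cap B_R(0)$. Your version keeps a fixed term $B\,x_1^{\alpha_+}|x|^{-(n-2+2\alpha_+)}$ in the barrier and works on $B_{\rho_1}$ for $\rho_1<R$; this extra term is not needed (on $\Sigma\cap\{x_1\ge\delta_0\}$ you could simply enlarge $A$, since $x_1^{\alpha_+}\ge\delta_0^{\alpha_+}$ there, which is exactly what the paper does with $C_\rho$), and it makes your final estimate weaker than the paper's, though still sufficient for the stated conclusion. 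On the other hand, your derivation of the comparison step by writing $\psi=\Phi_\eps w$ so that $w$ solves a classical second-order inequality with no zeroth-order term is a genuine improvement in self-containedness: the paper delegates this to a reference, while you reproduce the ground-state-transform device inline. This is worth keeping.

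One parenthetical in your argument is wrong and should be removed: you claim uniformity in (b) is ``automatic because the denominator blows up uniformly on the slice $\{x_1=\delta\}$.'' That is false for $\mu\ge 0$: there $\alpha_-\ge 0$ and $\alpha_+>0$, so for $|x|$ bounded away from $0$ the quantity $x_1^{\alpha_-}+x_1^{\alpha_+}|x|^{-(n-2+2\alpha_+)}$ tends to $0$, not to $\infty$, as $x_1\to 0$ (it blows up only near the corner $|x|\approx x_1$). The paper's proof implicitly reads (b) as a locally uniform limit, and you in effect do the same when you pick a single $\delta_\eps$ valid for all $|x|\le\rho_1$; that is the correct interpretation, but it should be taken as part of the hypothesis rather than ``justified'' by the incorrect blow-up claim.
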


\begin{proof}
Without loss of generality we may assume that $h$ is continuous on $\R^n_+\cap \bar{B}_R(0)$.
Choose $C_\rho>0$ such that
$$h< C_\rho x_1^{\alpha_+}\;\tx{as $\rho<|x|<R$.}$$
For $\tau>0$, define a comparison function
$$h_\tau:=h-C_\rho x_1^{\alpha_+}-\tau\big(x_1^{\alpha_-}+x_1^{\alpha_+}|x|^{-(n-2+2\alpha_+)}\big).$$
Clearly, $h_\tau$ is $\L_\mu$-subharmonic in $\R^n_+\cap B_R(0)$ and $h_\tau\in C^2_{loc}(\R^n_+\cap B_R(0))$.

For every $\tau>0$, conditions $(a)$ and $(b)$ imply that  $h_\tau\le 0$ on a neighbourhood of $\partial(\R^n_+\cap B_R(0))$.
Hence we can apply the classical comparison principle for $\L_\mu$, which applies for any $\mu<1/4$ in proper subdomains of $\R^n_+\cap B_R(0)$ (see for instance \cite[Lemma 2.4]{BaMoRe08}) and deduce that $h_\tau\le 0$ everywhere in $\R^n_+\cap B_R(0)$.
By considering arbitrary small $\tau>0$, we conclude that $h\le C_\rho x_1^{\alpha_+}$ in $\R^n_+\cap B_R(0)$.
\end{proof}

Next we prove a localised version of a Keller--Osserman type bound for positive solutions of the nonlinear problem \eqref{P}.

%%%%%%%%%%%%%%%%%%%%%LLLLLLLLLLLLLLL
\begin{lemma}[\sc Keller--Osserman type bound]\label{p-KO}
Let $\mu<1/4$ and $p>1$. Let $u$ be an arbitrary positive solution of \eqref{P}
in $\R^n_+\cap B_R(0)$, for some $R>0$. Then
\begin{equation}\label{e-KO}
u(x)\le Cx_1^{-\frac{2}{p-1}}\quad\text{in $\R^n_+\cap B_{R/2}(0)$,}
\end{equation}
where $C>0$ is a universal constant that depends on $R$ but does not depend on $u$.
\end{lemma}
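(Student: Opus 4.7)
The plan is to reduce the singular Hardy term $\mu/x_1^2$ to a bounded zero-th order perturbation by scaling on balls of size comparable to $x_{0,1}$, and then invoke a Keller--Osserman bound that tolerates an additional linear term. To begin, I fix $x_0\in\R^n_+\cap B_{R/2}(0)$ and set $\rho:=x_{0,1}/2$. Since $|x_0|<R/2$ and $x_{0,1}\le|x_0|$, one checks that $B_\rho(x_0)\subset \R^n_+\cap B_R(0)$, and on this ball $x_1\ge \rho$, so $|\mu|/x_1^2\le 4|\mu|/x_{0,1}^2$. Rewriting \eqref{P} as $\Delta u=u^p-\mu x_1^{-2}u$ gives
\[
\Delta u\ge u^p-\frac{4|\mu|}{x_{0,1}^2}\,u \qquad\text{in } B_\rho(x_0).
\]
The rescaled function $\tilde u(y):=\rho^{2/(p-1)}u(x_0+\rho y)$ then satisfies, using $\rho^2=x_{0,1}^2/4$,
\[
\Delta \tilde u\ge \tilde u^{\,p}-|\mu|\,\tilde u \qquad\text{in } B_1,
\]
and the lemma reduces to a universal bound $\tilde u(0)\le M(n,p,|\mu|)$.

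I would then establish such a bound by the following variant of Keller--Osserman: for every $K\ge 0$ there exists $M=M(n,p,K)$ such that any nonnegative $v\in C^2(B_1)$ satisfying $\Delta v\ge v^{\,p}-Kv$ obeys $v(0)\le M$. The proof is comparison with the standard barrier $W(y):=M(1-|y|^2)^{-2/(p-1)}$. A direct computation (using $|y|^2,1-|y|^2\le 1$) shows $\Delta W\le W^p-KW$ in $B_1$ for $M$ large enough (depending only on $n,p,K$); imposing the additional lower bound $M\ge (K/p)^{1/(p-1)}$ ensures $pW^{p-1}\ge K$ throughout $B_1$, so that the convexity inequality $v^{\,p}-W^{\,p}\ge pW^{\,p-1}(v-W)$, valid where $v>W$, upgrades to $v^{\,p}-W^{\,p}\ge K(v-W)$. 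Consequently, on the open set $E:=\{v>W\}$ the function $v-W$ satisfies $\Delta(v-W)\ge 0$. Since $W\to\infty$ as $|y|\to 1$, $\overline E$ is compact in $B_1$ and $v-W=0$ on $\partial E$, whence the maximum principle forces $E=\emptyset$ and, in particular, $v(0)\le W(0)=M$. Applying this with $v=\tilde u$ and $K=|\mu|$ yields $u(x_0)\le 2^{2/(p-1)}M\, x_{0,1}^{-2/(p-1)}$, which proves the lemma with a constant that in fact depends only on $n,p,\mu$.

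The main obstacle is the Hardy potential itself: for $\mu\le 0$ the bound is immediate from the classical Keller--Osserman estimate, because $u$ is then a subsolution of $-\Delta u+u^p\le 0$; but for $0<\mu<1/4$ the Hardy term acts as a genuine destabilising source and cannot be dropped. The scaling step above is what converts this singular coefficient into a uniformly bounded zero-th order one, and the (modest) technical price is the perturbed Keller--Osserman comparison. The subtle point in that sublemma is that $t\mapsto t^p-Kt$ is not monotone near zero, so naive comparison fails; the requirement $W\ge (K/p)^{1/(p-1)}$, which gives a strictly positive lower bound on the barrier, is exactly what upgrades $v-W$ to a subharmonic function on the critical set and lets the maximum principle close the argument.
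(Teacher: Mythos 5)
Your proof is correct, and it takes a genuinely different route from the paper. The paper works directly in the original domain $\{x_1>\epsilon\}\cap B_R(0)$ with the explicit supersolution $U=c(x_1-\epsilon)^{-2/(p-1)}(R-r)^{-2/(p-1)}$ (infinite on the boundary), verifies $\L_\mu U\le U^p$ by a direct computation that must absorb the singular Hardy term, applies the comparison principle of \cite[Lemma 2.4]{BaMoRe08}, and finally lets $\epsilon\to 0$. You instead localize to the ball $B_{x_{0,1}/2}(x_0)$, rescale by $\rho=x_{0,1}/2$ so that the Hardy coefficient becomes the uniformly bounded constant $|\mu|$, and then invoke a perturbed Keller--Osserman bound on $B_1$ via the barrier $W=M(1-|y|^2)^{-2/(p-1)}$; your observation that imposing $M\ge (K/p)^{1/(p-1)}$ makes $pW^{p-1}\ge K$, so that $v-W$ is subharmonic on $\{v>W\}$, is exactly what repairs the lack of monotonicity of $t\mapsto t^p-Kt$. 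The modularity buys you a cleanly stated auxiliary sublemma and a constant that depends only on $n,p,\mu$, not on $R$; this $R$-independence is in fact forced by the scaling invariance $u\mapsto a^{2/(p-1)}u(a\cdot)$ of \eqref{P}, and is slightly sharper than the $R$-dependent constant the paper's computation delivers. The paper's approach, by contrast, produces a single global barrier in the half-ball which it can reuse via the comparison principle without needing any covering or scaling step.
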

%%%%%%%%%%%%%%%%%%%%%%lllllllllll
\begin{proof}
The argument is  based, as in  the proof of Lemma 35 in \cite{BaMoRe08}, on the construction of a supersolution $U$ satisfying
$$
\Delta U+\frac{\mu}{x_1^2}U\leq U^p \tx{in}\{x_1>\epsilon\}\cap B_R(0),\quad U=\infty \tx{on} \partial \left(\{x_1>\epsilon\}\cap B_R(0)\right) .
$$
We set $U=c(x_1-\epsilon)^{-\frac{2}{p-1}}(R-r)^{-\frac{2}{p-1}}$.  Then
\begin{align*}
\Delta U&= c(x_1-\epsilon)^{-\frac{2}{p-1}}\Delta (R-r)^{-\frac{2}{p-1}} -2c\left(\frac{2}{p-1}\right)^2(x_1-\epsilon)^{-\frac{2}{p-1}-1}(R-r)^{-\frac{2}{p-1}-1}\frac{x_1}{r} \\
&+ c(R-r)^{-\frac{2}{p-1}}\Delta (x_1-\epsilon)^{-\frac{2}{p-1}}.
\end{align*}
Furthermore
\begin{align*}
\Delta(R-r)^{-\frac{2}{p-1}}&= 2\frac{(p+1)}{(p-1)^2}(R-r)^{-\frac{2p}{p-1}}+\frac{2(n-1)}{r(p-1)}(R-r)^{-\frac{2}{p-1}-1}\\
&=: A_p(R-r)^{-\frac{2p}{p-1}}+B_p\frac{(R-r)^{-\frac{2}{p-1}-1}}{r}\\
\Delta(x_1-\epsilon)^{-\frac{2}{p-1}}&= 2\frac{(p+1)}{(p-1)^2}(x_1-\epsilon)^{-\frac{2p}{p-1}}=A_p(x_1-\epsilon)^{-\frac{2p}{p-1}},
\end{align*}
which leads to
\begin{multline*}
\L_\mu U-U^p= c(x_1-\epsilon)^{-\frac{2p}{p-1}}(R-r)^{-\frac{2p}{p-1}}\times\\
\left[(x_1-\epsilon)^2(A_p +B_p\frac{R-r}{r})-C_p(x_1-\epsilon)(R-r) \frac{x_1}{r}\right.
\\\left.+A_p(R-r)^2+\frac{\mu}{x_1^2}(x_1-\epsilon)^2(R-r)^2-c^{p-1}
\right],
\end{multline*}
where $C_p=2\left(\frac{2}{p-1}\right)^2$.
It is easy to see that for large $c$ the expression in the brackets is negative for all $\epsilon <\epsilon_0$. Consequently $U$
is a supersolution in $\R^n_+\cap B_R(0)$.

Observe that $U,u\in C^2_{loc}(\R^n_+\cap B_R(0))$.
By the classical comparison principle for $\L_\mu$, which applies for any $\mu<1/4$ in proper subdomains of $\R^n_+\cap B_R(0)$
(see for instance \cite[Lemma 2.4]{BaMoRe08})
it follows that $u\leq U$ in $\R^n_+\cap B_R(0)$
and $u< cx_1^{-\frac{2}{p-1}}(R/2)^{-\frac{2}{p-1}}$ in $\R^n_+\cap B_{R/2}(0)$.
\end{proof}

Next we establish the following nonexistence result which extends Theorem \ref{thm:sep} (ii) to non separable solutions.

\begin{proposition}\label{lemma-nonexistence}
Let $\mu<1/4$ and $p>p_c:=1+\frac{2}{n-2+\alpha_+}$. Then for any $R>0$, equation \eqref{P} admits no positive solutions $u$ in $\R^n_+\cap B_R(0)$ which satisfy
%$$\forall\rho\in(0,R):\quad\limsup_{x_1\to 0}\frac{u(x)}{x_1^{\alpha_+}}<+\infty\;\tx{as $\rho<|x|<R$,}\leqno{(a' - old)}$$
$$u(x)\le c\,x_1^{\alpha_+}|x|^{-\alpha_+ - \frac{2}{p-1}}\;\tx{in $\R^n_+\cap B_R(0)$,}\leqno{(a)}$$
$$\liminf_{x\to 0}\frac{u(x)}{|x|^{-\frac{2}{p-1}}}>0\;\tx{as $x\to 0$ nontangentially in $\R^n_+$.}\leqno{(b)}$$
\end{proposition}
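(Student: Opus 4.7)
The plan is to reduce to the linear Phragm\'en--Lindel\"of comparison principle of Lemma \ref{l-PL} applied to $u$ itself. Since the nonlinearity has the absorption sign, $-\L_\mu u=-u^p\le 0$, so $u$ is $\L_\mu$-subharmonic in $\R^n_+\cap B_R(0)$. If I can verify hypotheses $(a)$ and $(b)$ of that lemma for $u$, I will obtain a pointwise bound $u(x)\le C x_1^{\alpha_+}$ in $\R^n_+\cap B_R(0)$, which decays far too fast at the origin to be compatible with the nontangential lower bound $(b)$ of the proposition.

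Verifying the hypotheses of Lemma \ref{l-PL} is where the assumption $p>p_c$ enters. Hypothesis $(a)$ is immediate: for any $\rho\in(0,R)$, assumption $(a)$ of the proposition yields
\[
u(x)\le c\rho^{-\alpha_+-\frac{2}{p-1}}\,x_1^{\alpha_+}\qquad\text{for }\rho<|x|<R,
\]
so the required $\limsup$ is finite on that shell. For hypothesis $(b)$, I would analyse
\[
\frac{u(x)}{x_1^{\alpha_-}+x_1^{\alpha_+}|x|^{-(n-2+2\alpha_+)}}\le \frac{c\,x_1^{\alpha_+}|x|^{-\alpha_+-\frac{2}{p-1}}}{x_1^{\alpha_-}+x_1^{\alpha_+}|x|^{-(n-2+2\alpha_+)}}
\]
in two regimes. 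Where $|x|$ stays bounded away from $0$, the first term in the denominator dominates and the ratio is $O(x_1^{\alpha_+-\alpha_-})\to 0$ as $x_1\to 0$. Where $|x|\to 0$ (this is the approach to the origin, which is also a boundary point of $\R^n_+\cap B_R(0)$), the second term dominates and the ratio is bounded by $c|x|^{n-2+\alpha_+-\frac{2}{p-1}}$, which tends to $0$ precisely because $p>p_c$ is equivalent to $n-2+\alpha_+>\frac{2}{p-1}$.

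Lemma \ref{l-PL} then produces a constant $C>0$ with $u(x)\le Cx_1^{\alpha_+}$ throughout $\R^n_+\cap B_R(0)$. Along any nontangential approach to the origin one has $x_1\le|x|$, so
\[
u(x)|x|^{\frac{2}{p-1}}\le Cx_1^{\alpha_+}|x|^{\frac{2}{p-1}}\le C|x|^{\alpha_++\frac{2}{p-1}}\longrightarrow 0\qquad\text{as }x\to 0,
\]
since $\alpha_+>0$ whenever $\mu<1/4$. This contradicts $(b)$, so no such $u$ exists. The main technical point is the verification of hypothesis $(b)$ of Lemma \ref{l-PL} in the regime $|x|\to 0$; it is exactly this corner of the boundary that dictates the critical exponent $p_c$. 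I note that the Keller--Osserman bound of Lemma \ref{p-KO} is not needed here, because assumption $(a)$ of the proposition already provides a quantitative decay rate at $\{x_1=0\}$ and a quantitative blow-up rate at the origin, which is all that Phragm\'en--Lindel\"of requires.
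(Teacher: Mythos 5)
Your proof is correct and follows the same strategy as the paper's: verify the hypotheses of the Phragm\'en--Lindel\"of estimate (Lemma \ref{l-PL}) for $u$, noting that the subcriticality $p>p_c$ is exactly what makes condition $(b)$ of that lemma hold near the origin, and then derive the bound $u\le Cx_1^{\alpha_+}$, which is incompatible with the non-tangential blow-up of order $|x|^{-2/(p-1)}$. The only difference is that you spell out the two-regime verification of the lemma's hypothesis $(b)$ and the final contradiction in more detail than the paper does, which is a welcome clarification rather than a departure.
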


\begin{proof}
Since $p>1+\frac{2}{n-2+\alpha_+}$, using condition $(a)$ and $\alpha_-<\alpha_+$, we conclude that for $x\in\R^n_+\cap B_{R}(0)$,
\begin{equation*}
\lim_{x_1\to 0}\frac{u(x)}{x_1^{\alpha_-}+x_1^{\alpha_+}|x|^{-(n-2+2\alpha_+)}}=0.
\end{equation*}
Then, by Lemma \ref{l-PL} with $\rho=R/2$, we conclude that
\begin{equation*}
\lim_{x_1\to 0}\frac{u(x)}{x_1^{\alpha_+}}<\infty.
\end{equation*}
But this contradicts  $(b)$.
\end{proof}

Theorem \ref{thm-nonexist} now follows as a special case of Proposition \ref{lemma-nonexistence},
while Theorem \ref{t-non-KO} follows directly from Lemma \ref{p-KO}
and \eqref{e-minus}.

%%%%%%%%%%

%%%%%%%%%%%%%%%%%%%%%%%%%%%%%%%%%%%%SSSSSSSSSSSSSSSSSSSSSSSS
%%%%%%%%%%%%%%%%%%%%%%%%%%%%
%%%%%%%%%%%%%%%%%%%%%%%%%%%%%%%%%%%%%%

\section{Solutions with a general singular set: proof of Theorem \ref{thm-general}}\label{s-general}

In this section we construct solutions of the nonlinear problem \eqref{P} that have a strong singularity
on an arbitrary closed subset $F$ of the boundary, and that behave either as $x_1^{\alpha_+}$ or as $x_1^{\alpha_-}$ as
$x\to (0,\xi)\in\partial \mathbb{R}^n_+\setminus F$ non-tangentially. Here $x=(x_1,\xi)$ and $\xi=(x_2,\ldots, x_n)$.

We start with some notation. Denote by $K_\mu$ the Martin kernel of $\L_\mu$ in $\R^n_+$. It is known that
\begin{equation*}
K_\mu(x,y)\sim x_1^{\alpha_+}|x-y|^{2\alpha_- -n}\quad \forall x\in \R^n_+,\, y\in \partial\R^n_+.
\end{equation*}
Let $K_{\mu,R}$ be the Martin kernel of $\L_\mu$ in $D_R:=B_R(0)\cap \R^n_+$. Then
$$K_{\mu,R}\to K_\mu \quad \text{as }R\to\infty,$$
uniformly on compact subsets of $\R^n_+$.
By the representation theorem of Ancona \cite{Ancona}, every positive $\L_\mu$-harmonic function $u$ in $D_R$ can be represented in the form
 \begin{equation}\label{Rep,DR}
 u(x)=\int_{D_R}K_{\mu,R}(x,y)d\nu(y),
 \end{equation}
where $\nu\in \M_+(\partial D_R)$ ($=$ space of positive finite Borel measures in $\partial D_R$). Conversely, for every $\nu$ as above the function $u$ defined in \eqref{Rep,DR} is $\L_\mu$-harmonic in $D_R$.

Denote,
$$T_{r,r'}:=\{x=(x_1,x')\in \R^n_+: 0< x_1< r,\; |x'|<r'\}.$$
If $(0,x')\in \partial\R^n_+$, put $T_{r,r'}(0,x')= (0,x')+T_{r,r'}$.
\medskip

\noindent\textit{Proof of Theorem \ref{thm-general}.}
Denote by $v_0$ the solution of \eqref{P} in $\R^n_+$, constructed in Theorem \ref{thm-main}. It satisfies
$$\lim_{x_1\to 0}\frac{v_0(x_1,0)}{x_1^{2/(p-1)}}= c_0.$$
Denote  $v_\xi=v_0(x+(0,\xi))$. Let $\{\xi_k\}$ be a dense sequence of points in $F$.
For every $m$, $\max(v_{\xi_1},\ldots, v_{\xi_m})$ is a subsolution and $v_{\xi_1}+\ldots, +v_{\xi_m}$ is a supersolution. Therefore there exists a solution $V_m$ of \eqref{P} in $\R^n_+$, such that
\[\max(v_{\xi_1},\ldots, v_{\xi_m})\leq V_m\leq v_{\xi_1}+\ldots, +v_{\xi_m}.\]
In fact there exists a minimal such solution and it is this solution that we denote by $V_m$.
Clearly the sequence $\{V_m\}$ increases and $$\liminf_{x_1\to 0}\frac{V_m(x_1,\xi_k)}{x_1^{2/(p-1)}}\geq c_0, \quad k=1,\ldots,m,$$
uniformly in the following sense: for every $\epsilon\in(0,1)$ there exists $a(\epsilon)>0$, independent of $m$, such that
$$ \frac{V_m(x_1,\xi_k)}{x_1^{2/(p-1)}}\geq (1-\epsilon)c_0 \quad   k=1,\ldots m,\; 0<x_1<a(\epsilon).$$
Obviously, the solution $V=\lim V_m$ satisfies the same inequality,
$$\frac{V(x_1,\xi_k)}{x_1^{2/(p-1)}}\geq (1-\epsilon)c_0 \quad k=1,2,\ldots,\; 0<x_1<a(\epsilon).$$
Because of continuity of $\xi \mapsto V(x_1,\xi)$ for every fixed $x_1>0$,
$$\frac{V(x_1,\xi)}{x_1^{2/(p-1)}}\geq (1-\epsilon)c_0 \quad \text{for every}\quad (0,\xi)\in F,\;    0<x_1<a(\epsilon).$$
This inequality and the Keller--Osserman estimate imply  \eqref{gen1}.

For any point $y\in \R^n$ put
$$d_F(y)=\mathrm{dist}(y,F).$$
Clearly,
\begin{equation*}
 v_\xi(x)=v_0((0,\xi)+x)\leq V(x) \qquad \forall \xi\in F,\; x\in \R^n_+.
\end{equation*}
This implies the left hand inequality in \eqref{gen2}. We turn to the proof of the right hand inequality.

For every $a>0$ and every solution $u$ of (${P}_\mu$) in $\R_+^N$, put
$$u_a(x)=a^{\frac{2}{p-1}}u(ax).$$
Then $u_a$ too is a solution of (${P}_\mu$).
If $F$ is the set of strongly singular points of $u$  then $F^a:= \frac{1}{a}F$ is the set of strongly singular points of $u_a$. If $P\in \bar\R^n_+\setminus F$, we denote by $u^P$ the function $u_a$ with $a=d_F(P)/4$. Let $V$ and $V_k$ be defined as before.

 Let $P\in \partial\R^n_+\setminus F$ and $a=d_F(P)/4$. We shall prove that there exists a constant $C$ depending only on $\mu,c,p$ such that
\begin{equation}\label{egen1}
 V_k^P(z)\leq C z_1^{\alpha_+} \quad\forall z\in T_{1,1}(P_a), \quad P_a=\frac{1}{a}P.
\end{equation}
As $C$ is independent of $k$, \eqref{egen1} implies that
\begin{equation*}
 V^P(z)\leq C z_1^{\alpha_+} \quad\forall z\in T_{1,1}(P_a), \quad P_a=\frac{1}{a}P.
\end{equation*}
Since
$$V^P(z)=a^{\frac{2}{p-1}}V(az),$$
the last inequality is equivalent to
\begin{equation*}
V(x)=a^{-\frac{2}{p-1}}V^P(x/a)\leq C a^{-\frac{2}{p-1}}(x_1/a)^{\alpha_+}
\leq C'd_F(x)^{-1-\frac{2}{p-1}}x_1^{\alpha_+}
\quad\forall x\in T_{a,a}(P)
\end{equation*}
where $C'=8^{1+\frac{2}{p-1}}C$. Thus \eqref{egen1} implies the right hand side inequality in \eqref{gen2}.

Now to prove \eqref{egen1}, we fix $k$ and $P$ and put $w=V_k^P$. Since $d_{F^a}(P^a)=4$, $w$ vanishes continuously at $z_1=0$ for $|z'-P_a|<4$. By the Keller--Osserman estimate,
$w(z)\leq c(\mu,n,p)z_1^{-\frac{2}{p-1}}$. In particular
$w<c'=c2^{\frac{2}{p-1}}$ in the strip $1/2<z_1<2$. Let $V_\mu^w=-\frac{\mu}{z_1^2} + w^{p-1}$, $\L_\mu^w= \Delta +V_\mu^w$ and
$$-\L_\mu^w w=0\;\text{in}\; \R^n_+.$$
Obviously $|V_\mu^w|< c_0 z_1^{-2}$ -- where $c_0$ is independent of $w$ -- and there exists a positive $\L_\mu^w$-superharmonic function, e.g. a positive eigenfunction of $\L_\mu$. Therefore the results of Ancona \cite{Ancona} -- specifically the boundary Harnack principle (briefly BHP) -- may be applied to $\L_\mu^w$. Let $G_\mu^w$ denote the Green kernel of $\L_\mu^w$ in $\R^n_+$. Let $(0,z')$ be a point such that $|z'-P_a|<1$ and let $Z^*=(2,z')$, Z=(1,z').
Applying BHP to the pair $w$ and $G_\mu^w(\cdot,Z^*)$ in $T_{3/2,2}(0,z')$ we obtain
$$ \frac{w(Z)}{w(z)}\sim \frac{G_\mu^w(Z,Z^*)}{G_\mu^w(z,Z^*)} \quad \forall z\in T_{1,1}(0,z').$$
Hence
$$w(z)\leq C_1 \frac {w(Z)}{G_\mu^w(Z,Z^*)}G_\mu^w(z,Z^*)\leq C_2 G_\mu^w(z,Z^*)\leq C_2G_\mu(z,Z^*)\leq C z_1^{\alpha_+}.$$
Here we used the fact that $G_\mu^w\leq G_\mu$.
\qed

\noindent\textit{Notation.} If $E\subset \R^{n-1}=\partial\R^n_+$ and $\beta>0$, we denote
$$E_\beta=\{x=(\beta,x'): \mathrm{dist\,}(x',E)<\beta\}.$$
If $\tau$ is a positive finite measure on a Borel set $E$ as above, we denote
$$\mathbb{K}_\mu[\tau;E]=\int_{E} K_\mu(x,y)d\tau(y).$$

Following \cite{Marcus-Nguen} we say that a positive Borel function $u$ in $\R^n_+$ has normalized boundary trace $\tau$ on $E$ if $\tau(E)<\infty$ and
\begin{equation*}
\frac{1}{\alpha_-}\int_{E_\beta}|u-\mathbb{K}_\mu[\tau;E]|dS\to 0 \;\;\text{as}\;\beta\to0.
\end{equation*}

\begin{proposition}\label{P5.1}
Let $p<p_c$ and $\mu<1/4$. Let $F$ be an arbitrary compact set and let $\nu$ be a positive locally bounded measure on $\partial\R^n_+$. Then there exists a solution $u$ of \eqref{P} such that $u$ is strongly singular in the sense of \eqref{gen1} at every point of $F$ and has normalized boundary trace $\nu$ on  every compact subset of $F':=\partial\R^n_+\setminus F$.
\end{proposition}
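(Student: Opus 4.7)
The plan is to run a monotone sub/supersolution scheme in which the strong singularity on $F$ is encoded by the solution from Theorem \ref{thm-general} and the boundary datum $\nu$ on $F'$ is carried by Martin potentials. Let $U$ be the solution produced by Theorem \ref{thm-general}: by \eqref{gen2} it satisfies $U(x)\le C x_1^{\alpha_+}\mathrm{dist}(x,F)^{-\alpha_+-2/(p-1)}$, from which one checks that $U$ has zero normalized trace on every compact $E\Subset F'$, since $\beta^{-\alpha_-}\int_{E_\beta}U\,dS=O(\beta^{\alpha_+-\alpha_-})\to 0$ as $\beta\to 0$. For each $k\in\N$ truncate to the finite measure $\nu_k:=\nu|_{\bar B_k(0)\cap\partial\R^n_+}$ and set $H_k:=\mathbb{K}_\mu[\nu_k;\partial\R^n_+]$. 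Then $U$ is a subsolution of \eqref{P} (being a solution) and $U+H_k$ is a supersolution, since $-\L_\mu(U+H_k)+(U+H_k)^p=(U+H_k)^p-U^p\ge 0$ because $H_k\ge 0$.

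The standard monotone iteration in $D_R:=\R^n_+\cap B_R(0)$ starting from the supersolution $U+H_k$ yields a maximal solution $u_{k,R}$ of \eqref{P} in $D_R$ with $U\le u_{k,R}\le U+H_k$; letting $R\to\infty$ and invoking the Keller--Osserman bound of Lemma \ref{p-KO} together with interior elliptic estimates produces $u_k$ solving \eqref{P} on $\R^n_+$ with $U\le u_k\le U+H_k$. By maximality $(u_k)$ is non-decreasing in $k$, is locally dominated by $U+\mathbb{K}_\mu[\nu]$ (finite on compact subsets of $\R^n_+$ by local boundedness of $\nu$), so $u:=\lim_k u_k$ exists and solves \eqref{P}. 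The strong singularity of $u$ on $F$ is inherited from $u\ge U$ and the lower bound \eqref{gen1} of Theorem \ref{thm-general}.

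It remains to show that $u$ has normalized boundary trace $\nu$ on each compact $E\Subset F'$. The upper bound $\mathrm{tr}(u)\le\nu|_E$ is routine: from $u\le U+\mathbb{K}_\mu[\nu]$, the $U$ piece contributes zero normalized trace on $E$ by the estimate recalled above, while $\mathbb{K}_\mu[\nu-\nu|_E;\cdot]$ is $\L_\mu$-harmonic in a neighbourhood of $E$ and hence also has zero normalized trace on $E$. For the lower bound one uses the Riesz--Martin type decomposition $u+G_\mu[u^p]=\mathbb{K}_\mu[\mathrm{tr}(u);\partial\R^n_+]$ valid for positive solutions of \eqref{P} (with $G_\mu$ the Green kernel of $\L_\mu$) and shows that the nonlinear correction $G_\mu[u^p]$ has vanishing normalized trace on $E$; combined with the upper bound this pins $\mathrm{tr}(u)=\nu$ on $E$.

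The main obstacle is precisely this last step. Its justification requires adapting to the half-space the potential-theoretic framework of \cite{Marcus-Nguen,MM}: the Riesz--Martin decomposition for $\L_\mu$-supersolutions, and the subcritical Kato-type bounds ensuring that $u^p$ is integrable against the appropriate Martin weight near $E$ so that $G_\mu[u^p]$ indeed has zero normalized trace on $E$. The subcriticality $p<p_c$ is used exactly here, through $u\lesssim U+\mathbb{K}_\mu[\nu]$ and the boundary behaviour $K_\mu(x,y)\sim x_1^{\alpha_+}|x-y|^{2\alpha_- -n}$ recalled before \eqref{Rep,DR}; the local boundary Harnack and Martin-kernel estimates of Ancona then complete the identification of the trace.
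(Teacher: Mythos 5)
Your sandwich is $U \le u_k \le U + H_k$ with $H_k = \mathbb{K}_\mu[\nu_k;\cdot]$ a \emph{linear} Martin potential, and the lower barrier $u \ge U$ contributes nothing to the normalized boundary trace on $E \Subset F'$: you compute yourself that $U$ has zero normalized trace there. So pinning the normalized trace of $u$ on $E$ down to $\nu|_E$ forces you into the nonlinear Riesz--Martin machinery (a decomposition of the form $u + G_\mu[u^p] = \mathbb{K}_\mu[\cdot]$ with $G_\mu$ the Green kernel of $\L_\mu$, plus a proof that $G_\mu[u^p]$ has vanishing normalized trace on $E$), which you flag as the ``main obstacle'' but do not carry out. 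That step is not routine; it is the substance of the normalized-trace theory of \cite{Marcus-Nguen,MM}, and the subcritical Kato-type integrability of $u^p$ against the Martin weight near $E$ --- which is exactly what would be needed --- is only asserted. As written, this is a genuine gap.

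The paper sidesteps the gap by a different choice of lower barrier. Instead of $U$ alone, it introduces $u_\nu$, the solution of the \emph{nonlinear} equation \eqref{P} in $\R^n_+$ with normalized boundary trace $\nu$; its existence is imported from \cite{Marcus-Nguen} via the increasing limit, as $R\to\infty$, of solutions in $D_R$ with trace $\nu$ on the flat part of $\partial D_R$ and zero on the spherical cap, the limit being controlled by the Keller--Osserman bound. Then $\max(u_\nu, U)$ is a subsolution of \eqref{P}, $u_\nu + U$ a supersolution, and the solution $u$ sandwiched between them inherits the trace $\nu$ on $E$ immediately: from below because $u \ge u_\nu$, which already has trace $\nu$, and from above because $U$ contributes zero trace on $E$. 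No Riesz decomposition or Green-potential estimate is needed. Your linear-potential route could in principle be completed, but only by reproving a chunk of \cite{Marcus-Nguen} in the half-space; carrying the boundary datum on $F'$ by the nonlinear solution $u_\nu$ rather than by $\mathbb{K}_\mu[\nu]$ is what makes the identification of the trace trivial.
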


\proof For every $R>0$ let $u_R$ be the solution of \eqref{P} in $D_R$ with normalized boundary
trace $\nu$ on $\partial_1 D_R:= (\partial D_R) \cap [x_1=0]$ and zero on $\partial_2 D_R = (\partial D_R)\cap\R^n_+$. (The existence and uniqueness of this solution is proved in \cite{Marcus-Nguen} for $0\leq \mu\leq 1/4$ and the proof is similar when $\mu<0$.) Then $u_R$ increases as $R$ increases and, because of the Keller Osserman estimate the limit $u_\nu:=\lim_{R\to\infty} u_R$ is a solution of \eqref{P} in $\R^n_+$ with normalized boundary trace $\nu$.

Let $U$ be a solution as in Theorem \ref{thm-general}. Then $\max(u_\nu,U)$ is $\L_\mu$-subharmonic and $u_\nu+u_F$ is $\L_\mu$-superharmonic. Therefore there exits an $\L_\mu$-harmonic function $u$ between these two. Clearly $u$ has the required boundary behavior.
\qed
\medskip

\begin{remark}
Let $1<p<p_{KO}$.
If $\nu=fdS$ where $f\in L^1(\R^{n-1})$ is a positive function then $w^f_R=\mathbb{K}_\mu[\nu; |x'|<R]$ increases with $R$ and $w_\nu=\lim_{R\to\infty}w^f_R$ is an $\L_\mu$-harmonic function in $\R^{n}_+$ with normalized boundary trace $\nu$.
It is not difficult to show that if, in addition, $f$ is continuous then $w_\nu (x)/x_1^{\alpha_-}\to cf$ when $x_1\to 0$ and $c$ is a constant independent of $f$.
It was shown in \cite{MM} that, for $\nu$ as above, the solution $u_\nu$ of $(P_\mu)$ exists for every $p\in (1,p_{KO})$.
Finally, $u_\nu/ w_\nu\to 1$, $\nu$-a.e. (see \cite {Marcus-Nguen}).
Therefore, $$\lim_{x_1\to0}u_\nu (x)/x_1^{\alpha_-}= cf$$ where $c$ is a constant independent of $f$.
\end{remark}

\begin{remark} When $p\geq p_c$, there may not exists any positive solution vanishing on $F'=\partial\R^n_+\setminus F$. Therefore the result stated in Theorem \ref{thm-general} does not extend to this case. However, if $1<p<p_{KO}$ then, following the construction of $V$  in the proof of Theorem \ref{thm-general} -- but with a function  $v_0$ satisfying \eqref{e-minus} -- one finds that
for every $y\in F$, $V(x)|x-y|^{2/(p-1)}$
converges to a positive constant $c_y$ as $x\to y$ non-tangentially.
Furthermore $V$ behaves like $x_1^{\alpha_-}$ at $F'$. More precisely, one obtains,
$$\frac{1}{c}x_1^{\alpha_-}\mathrm{dist}(x,F)^{-\alpha_- -\frac{2}{p-1}}\leq V(x)\leq c x_1^{\alpha_-}\mathrm{dist}(x,F)^{-\alpha_- -\frac{2}{p-1}}, \quad  \quad \forall x\in \R^n_+.$$
\end{remark}

\bigskip
\noindent\textbf{Acknowledgment.}
This work was initiated during a visit of CB and VM at the Technion.
Part of this work was conducted at the Isaac Newton Institute for Mathematical Sciences, Cambridge,
in the framework of "Free Boundary Problems and Related Topics" (2014) programme.
The support and hospitality of both institutions are gratefully acknowledged.

The authors are grateful to Yehuda Pinchover for many fruitful discussions and to an anonymous referee for their insightful comments.

\end{document}